\documentclass[12pt,a4paper,oneside,reqno]{amsart} %<<<1
\usepackage[a4paper,hmargin={2.4cm,2.4cm},vmargin={3cm,3cm}]{geometry}
\usepackage{amsmath}
\usepackage{amsfonts}
\usepackage{amssymb}
\usepackage{amsthm}
\usepackage{enumerate}
\usepackage{color}
\usepackage{url}
\usepackage{graphicx}
\usepackage{hyperref}
%\usepackage[notref,notcite,color]{showkeys}

%<<<1 theorems
\theoremstyle{plain}
\newtheorem*{theorem*}{Theorem}
\newtheorem{theorem}{Theorem}[section]
\newtheorem{lemma}[theorem]{Lemma}
\newtheorem{proposition}[theorem]{Proposition}

\newtheorem{assumption}{Assumption}

\newtheorem*{acknowledgment}{Acknowledgment}

\theoremstyle{remark}

\theoremstyle{definition}

%<<<1 commands
\numberwithin{equation}{section}
\overfullrule 4mm

\newcommand{\thmref}[1]{Theorem~\ref{#1}}
\newcommand{\secref}[1]{Section~\ref{#1}}
\newcommand{\lemref}[1]{Lemma~\ref{#1}}
\newcommand{\propref}[1]{Proposition~\ref{#1}}

\setlength{\marginparwidth}{50pt}
\setlength{\marginparsep}{10pt}

\newcommand{\Ind}[1]{\mathbf{1}_{\left\{#1\right\}}}

\newcommand{\PP}{\mathbb{P}}
\newcommand{\NN}{\mathbb{N}}
\newcommand{\EE}{\mathbb{E}}

\newcommand{\ZZ}{\mathbb{Z}}
\newcommand{\RR}{\mathbb{R}}
\newcommand{\cA}{\mathcal{A}}
\newcommand{\cB}{\mathcal{B}}

\newcommand{\cE}{\mathcal{E}}
\newcommand{\cF}{\mathcal{F}}
\newcommand{\cG}{\mathcal{G}}
\newcommand{\cI}{\mathcal{I}}

\newcommand{\cP}{\mathcal{P}}

\newcommand{\cS}{\mathcal{S}}

\newcommand{\cX}{\mathcal{X}}

\newcommand{\Var}{\operatorname{Var}}
\newcommand{\floor}[1]{\left\lfloor #1 \right\rfloor}

\newlength{\dhatheight}

\begin{document} %<<<1

\title[]{Randomly trapped random walks on $\mathbb Z^d$}
\author[]{Ji\v{r}\'{i} \v{C}ern\'{y}}
\author[]{Tobias Wassmer}
\address{\tiny University of Vienna\\Faculty of
Mathematics\\Oskar-Morgenstern-Platz 1\\A-1090 Wien\\Austria}
\email{\tiny
\href{mailto:tobias.wassmer@univie.ac.at}{tobias.wassmer@univie.ac.at}}

\begin{abstract}
  We give a complete classification of scaling limits of randomly trapped
  random walks and associated clock processes on $\mathbb Z^d$, $d\ge 2$.
  Namely, under the hypothesis that the discrete skeleton of the
  randomly trapped random walk has a slowly varying return probability, we
  show that the scaling limit of its clock process is
  either deterministic linearly growing or a stable
  subordinator. In the case when the discrete skeleton is
  a simple random walk on $\mathbb Z^d$, this implies that the
  scaling  limit of the randomly trapped random walk is either
  Brownian motion or the Fractional Kinetics process, as conjectured in
  \cite{BCCR13}.
\end{abstract}

%\keywords{Random walk, Trap model, Clock process, Scaling limit}
%\subjclass[2010]{05C81}
\date{\today}

\maketitle

%---------------------------------------------------------------------------------------------------------------------------------------------------------
%---------------------------------------------------------------------------------------------------------------------------------------------------------
%---------------------------------------------------------------------------------------------------------------------------------------------------------
%---------------------------------------------------------------------------------------------------------------------------------------------------------

\section{Introduction}%<<<1

Randomly trapped random walks (RTRWs) were introduced in \cite{BCCR13} for
two main reasons. On one hand they generalize several classical models
of trapped random walks such as the continuous-time random walk or the
symmetric Bouchaud trap model. On the other hand they provide a tool
for describing random walks on some classical random structures such as
the incipient critical Galton Watson tree or the invasion percolation
cluster on a regular tree.

In \cite{BCCR13} the authors define the RTRW on general graphs and study
in depth the model on $\ZZ$. They give a complete classification of
scaling limits, showing that the limit of a RTRW on $\mathbb Z$ is one of
the following four processes: (i) Brownian motion, (ii) Fractional
Kinetics process, (iii) FIN singular diffusion, or (iv) a new class of
processes called spatially subordinated Brownian motion. They further
give sufficient conditions for convergence to the respective limits and
study in detail how the different limits arise. For RTRW on $\mathbb Z^d$,
$d\ge 2$, they conjectured that only the first two of the above scaling
limits are possible, that is RTRW on $\mathbb Z^d$ converges after
rescaling either to the Brownian motion or to the Fractional Kinetics
process. We prove this conjecture here.

Let us briefly introduce the model, its formal definition is given in
Section~\ref{sec:modelandresults} below. The RTRW on $\mathbb Z^d$ is a
particular class of random walk in random environment.  Its law is
determined by two inputs: (i) its step distribution, that is a
probability measure $\nu$ on $\mathbb Z^d$, and (ii) a probability
distribution $\mu $ on the space of all probability measures on
$(0,\infty)$ characterising its waiting times. The random environment of
the RTRW is given by an i.i.d.~collection
$\pi = (\pi_x)_{x\in \mathbb Z^d}$ of $\mu $-distributed probability
measures. For fixed $\pi$, the RTRW $X=(X(t))_{t\ge 0}$ is a
continuous-time process such that, whenever at vertex $x$, it stays there
a random duration sampled from the distribution $\pi_{x}$ and then moves
on according to the transition kernel $\nu(\,\cdot\, - x)$. If the
process $X$ visits $x$ again at a later time, the duration of this next
visit at $x$ is sampled again and independently from the distribution
$\pi_x$. We always assume that $X$ starts at $0\in \mathbb Z^d$ and use
$\mathbb P$ for the annealed distribution of the process $X$.

From the description above it is apparent that the RTRW is a time change
of the discrete-time random walk $(Y(n))_{n\ge 0}$ on $\mathbb Z^d$ with
one-step distribution $\nu$. Formally, $X$ can be written as
\begin{equation}
  \label{e:protimechange}
  X(t) = Y(S^{-1}(t)),
\end{equation}
where the time-change process $S:\mathbb N\to[0,\infty)$, the \emph{clock
  process}, measures the time needed for a given number of steps of the
RTRW and $S^{-1}$ is its right-continuous inverse.  In view of
\eqref{e:protimechange} it should not be surprising that the scaling
behaviour of $X$ is (essentially) determined by the scaling behaviour of
the clock process.

While we are primarily interested in $Y$ being a simple random walk on
$\mathbb Z^d$, $d\ge 2$, it does not complicate the proofs to make
the following far less restrictive assumption on the random walk $Y$, that is on the
one-step distribution $\nu $: Let
$r_n: \NN\to[0,1]$ be the probability that $Y$ does not return to its
starting point in $n$ steps,
\begin{equation*}
  r_n = \mathbb P[Y(k)\neq Y(0) \text{ for }
    k=1,\dots,n].
\end{equation*}
\begin{assumption}
  \label{ass:slowvar}
  The function $r_n$ can be written as $r_n = \frac{1}{\ell^*(n)}$ for a slowly
  varying function $\ell^{*}:\mathbb N \to [1,\infty)$.
\end{assumption}

Assumption~\ref{ass:slowvar} is obviously fulfilled for all transient
random walks, where $1/\ell^*(n)\to \gamma$ for some $\gamma\in(0,1)$,
but there are also recurrent walks satisfying it with $\ell^*(n)\to\infty$
as $n\to\infty$. In particular, the classical result of Kesten and Spitzer
\cite[Theorem~3]{KS63} implies that this assumption holds for all
random walks on $d\ge 2$ for which the subgroup of $\ZZ^d$
generated by the set $\{x:\nu(x)>0\}$ is $d$-dimensional
(we will call this ``genuinely $d$-dimensional'').

We can state our first main theorem giving
the complete classification of the scaling limits of the clock
process.

\begin{theorem}
  \label{thm:general}
  Let $S:\mathbb N\to [0,\infty)$ be the clock process of the RTRW.
  Suppose that Assumption~\ref{ass:slowvar} holds and there is a
  sequence $a_N\nearrow \infty$ such that for all but countably many
  $t\in [0,\infty)$
  \begin{equation}
    \label{e:SN}
    S_N(t) := \frac{1}{a_N} S(\floor{Nt})\xrightarrow{N\to\infty}\mathcal
    S(t) \qquad \text{in $\mathbb P$-distribution},
  \end{equation}
  where $\mathcal S:[0,\infty)\to[0,\infty)$ is a cadlag process
  satisfying the non-triviality assumption
  \begin{equation}
    \label{e:Snontriv}
    \limsup_{t\to\infty} \mathcal S(t)  = \infty \qquad \mathbb
    P\text{-a.s.}
  \end{equation}
  Then one of the following two cases occurs:
  \begin{enumerate}[(i)]
    \item The limit clock process is linear, $\cS(t)=Mt$ for some
    constant $M>0$, and the normalizing sequence satisfies $a_N = N \ell(N)$
    for some slowly varying function $\ell$.
    \item The limit clock process is an $\alpha$-stable subordinator,
    $\cS=V_\alpha$, $\alpha \in (0,1)$, and the normalizing sequence satisfies
    $a_N = N^{1/\alpha} \ell(N) $ for some slowly varying function $\ell$.
  \end{enumerate}
\end{theorem}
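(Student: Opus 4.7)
The strategy is to show that the limit process $\cS$ must be a self-similar subordinator and then to invoke the classification of such processes. Assumption~\ref{ass:slowvar} enters to upgrade the convergence in~\eqref{e:SN} to asymptotic independence and stationarity of increments.

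First I would establish that $\cS$ is a subordinator, i.e.\ a cadlag nondecreasing process with stationary and independent increments. Monotonicity is inherited directly from $S$. For independence of increments, the decomposition
\begin{equation*}
  S(n) = \sum_{x\in R_n}\sum_{j=1}^{L_n(x)}\tau_x^{(j)},
\end{equation*}
along the range $R_n$ of $Y$ (with local times $L_n(x)$ and i.i.d.\ holding times $\tau_x^{(j)}$ drawn under $\pi_x$) is central: conditionally on the trajectory of $Y$, contributions from disjoint sites are independent. Assumption~\ref{ass:slowvar} makes $Y$ effectively transient, so that the ranges of $Y$ on time intervals $[0,\lfloor Ns\rfloor]$ and $(\lfloor Ns\rfloor,\lfloor Nt\rfloor]$ overlap only on a set of sites whose total waiting-time contribution is negligible after division by $a_N$. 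Stationarity is obtained analogously by comparison with the RTRW restarted at time $\lfloor Ns\rfloor$.

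Second, the scaling relation $S_{\lfloor Nc\rfloor}(t)=(a_N/a_{\lfloor Nc\rfloor})\,S_N(ct)$ combined with~\eqref{e:SN} and a Convergence-of-Types argument (where non-triviality~\eqref{e:Snontriv} is used to rule out the degenerate case $\cS\equiv 0$) forces $a_{\lfloor Nc\rfloor}/a_N\to b(c)\in(0,\infty)$ and $\cS(ct)\stackrel{d}{=}b(c)\,\cS(t)$ for every $c>0$. Standard multiplicativity and regular-variation arguments then yield $b(c)=c^H$ for some $H>0$ and $a_N=N^H\ell(N)$ with $\ell$ slowly varying. The classification of self-similar subordinators concludes the proof: since one-sided stable distributions exist only for stability index $\alpha\in(0,1]$, the relation $H=1/\alpha$ forces $H\ge 1$. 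The boundary $H=1$ gives the deterministic drift $\cS(t)=Mt$ (case (i)), while $H>1$ yields an $\alpha$-stable subordinator with $\alpha=1/H\in(0,1)$ (case (ii)), matching the two alternatives of the theorem.

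The main obstacle is the first step, namely establishing asymptotic independence (and stationarity) of increments of the limit, because revisits of $Y$ induce genuine dependence of $S$ through the shared environment $\pi_x$. Assumption~\ref{ass:slowvar} is exactly what is needed to control this dependence: via the estimate $\EE|R_n|\asymp n/\ell^*(n)$ and complementary bounds on the local-time profile of $Y$, one shows that the overlap of past and future ranges of $Y$ across a fixed time point contributes only a lower-order term to the normalized clock process. A secondary technical point is that~\eqref{e:SN} is only one-dimensional convergence off a countable set; promoting this to the joint convergence required by the subordinator and Convergence-of-Types arguments is handled by inserting the monotonicity of $S$ and restricting to continuity points of the limit.
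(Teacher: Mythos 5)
Your proposal is correct and follows essentially the same route as the paper: establish that $\mathcal S$ has stationary increments (via stationarity of the successive-waiting-time sequence $\tilde\tau_k$, which the paper derives from a shift-invariance/environment-viewed-from-the-particle argument), establish independent increments by showing the waiting-time contribution from sites shared between disjoint time windows is negligible, derive $a_N = N^\rho\ell(N)$ and $\rho$-self-similarity by a Convergence-of-Types argument using the non-triviality hypothesis, and then invoke the classification of self-similar Lévy subordinators to conclude $\rho=1$ (drift) or $\rho>1$ ($\alpha$-stable with $\alpha=1/\rho\in(0,1)$). You correctly identify independence of increments as the main obstacle; note that in the paper's proof the difficulty is not merely that the overlap of ranges is small in cardinality, but that, in the absence of any moment hypothesis on the waiting times, a small set of sites could in principle carry a non-negligible share of the clock — this is resolved by an exchangeability argument (the paper's Lemma~\ref{lem:badtimes}) that bounds $\mathbb E[\tilde\tau_l / S(\lfloor Nt\rfloor)\mid Y]$ by $1/|R(\lfloor Nt\rfloor)|$, combined with control of frequently visited vertices.
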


In order to study the scaling limits of the RTRW itself, we need a more
restrictive assumption:
\begin{assumption}
  \label{ass:BM}
  The random walk $Y$ is genuinely $d$-dimensional.
  Its one-step distribution $\nu$ is centred,
	$\mathbb E[Y(1)]=0$, and has finite range, $\mathbb P[|Y(1)|>C]=0$ for
	some $C<\infty$.
\end{assumption}

This assumption ensures that the scaling limit of $Y$
is a $d$-dimensional Brownian motion: There exists a $d\times d$ matrix
$\mathcal A$ such that
\begin{equation}
  \label{e:bmlimit}
  Y_N(t):= \frac 1{\sqrt N}\, \mathcal A Y(\floor{Nt})
\end{equation}
converges to a standard $d$-dimensional Brownian motion. Note that by
the remark after Assumption~\ref{ass:slowvar}, for $d\geq2$
Assumption~\ref{ass:slowvar} is implied by Assumption~\ref{ass:BM}.
%The assumption
%on the finite range for $\nu$ seems rather restrictive, and is only needed
%to show Proposition~\ref{prop:convclock}. We believe that
%it should be possible to assume e.g.~only finite second moment.

Our second main result classifies the possible scaling limits of RTRW and
confirms the conjecture of \cite{BCCR13}.

\begin{theorem}
  \label{thm:class}
  Let $d\geq2$ and $X:[0,\infty)\to \mathbb Z^d$ be the RTRW.
  Suppose that Assumption~\ref{ass:BM} holds and there is a
  sequence $a_N\nearrow\infty$ such that the processes
  \begin{equation}
    \label{e:XN}
    X_N(t) := \frac{1}{\sqrt N}\, \mathcal A X(a_N t) = Y_N(S_N^{-1}(t))
  \end{equation}
  converge
  in $\PP$-distribution on the space $D^d$ of cadlag
  $\mathbb R^d$-valued functions equipped with the
  Skorohod $J_1$-topology to some process
  $\cX:[0,\infty)\to \mathbb R^d$ satisfying the non-triviality assumption
  \begin{equation}
    \label{e:Xnontriv}
    \limsup_{t\to\infty} | \mathcal X(t)| = \infty \qquad \mathbb
    P\text{-a.s.}
  \end{equation}
  Then one of the following two cases occurs:
  \begin{enumerate}[(i)]
    \item $a_N = N \ell(N)$ and $\cX(t) = B(M^{-1}t)$ for some constant
    $M>0$, some slowly varying function $\ell$, and a standard $d$-dimensional
    Brownian motion $B$.
    \item $a_N = N^{1/\alpha} \ell(N) $ for some slowly varying
    function $\ell$ and a parameter $\alpha \in (0,1)$, and $\cX(t)
    = B(V_{\alpha}^{-1}(t))$, where $B$ is a standard $d$-dimensional
    Brownian motion and $V_{\alpha}^{-1}(t) = \inf\{s\geq0:~V_{\alpha}(s) >
      t\}$ is the right-continuous inverse of an $\alpha$-stable subordinator
    $V_{\alpha}$ which is independent of $B$ (i.e.~$\cX$ is the Fractional
      Kinetics process).
  \end{enumerate}
\end{theorem}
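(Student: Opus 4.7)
The plan is to bootstrap the hypothesized $J_1$-convergence of $X_N$ into convergence of the clock process $S_N$ along subsequences, apply \thmref{thm:general} to classify its limit, and then read off the form of $\cX$ from the time-change representation $X_N(t)=Y_N(S_N^{-1}(t))$ together with Donsker's invariance principle $Y_N\Rightarrow B$ supplied by Assumption~\ref{ass:BM} (which, as noted after \eqref{e:bmlimit}, also implies Assumption~\ref{ass:slowvar} for $d\ge 2$).

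First I would show that $J_1$-convergence of $X_N$ combined with \eqref{e:Xnontriv} forces the families $\{S_N^{-1}\}$ and $\{S_N\}$ to be tight and that every subsequential limit $\cS$ of $S_N$ satisfies the non-triviality assumption \eqref{e:Snontriv}. The intuition is that $|X_N(t)|^2$ is of the same order as $S_N^{-1}(t)$ under the Brownian scaling of $Y_N$, so a collapse $S_N^{-1}(t)\to 0$ or an explosion $S_N^{-1}(t)\to\infty$ would be incompatible with $X_N\Rightarrow\cX$ obeying \eqref{e:Xnontriv}. Along any subsequence for which $S_N\Rightarrow\cS$, \thmref{thm:general} then yields precisely the two alternatives for $\cS$ and for the normalizing sequence $a_N$ claimed in (i) and (ii).

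Next I would identify $\cX=\lim X_N$ in each case by passing to the limit in the composition $Y_N\circ S_N^{-1}$. In case (i), $\cS(t)=Mt$ is deterministic and continuous, so $S_N^{-1}\to M^{-1}\mathrm{id}$ uniformly on compacts in probability; the continuity of composition in $J_1$ at a pair $(f,g)$ with $f$ continuous and $g$ continuous strictly increasing yields $X_N\Rightarrow B(M^{-1}\cdot)$. Case (ii) requires the joint convergence $(Y_N,S_N)\Rightarrow(B,V_\alpha)$ with $B$ and $V_\alpha$ \emph{independent}; granted this, the same composition continuity identifies $\cX$ as $B\circ V_\alpha^{-1}$, namely the Fractional Kinetics process.

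The main obstacle is the asymptotic independence of $B$ and $V_\alpha$ in case (ii), since the clock $S_N$ is built from waiting times at the very sites visited by $Y_N$ and is therefore not a priori independent of $Y_N$. The key mechanism is the transience-like behaviour in $d\ge 2$ quantified by Assumption~\ref{ass:slowvar}: the walk $Y$ visits a growing number of distinct vertices, so all but a vanishing fraction of the waiting times contributing to $S_N$ are drawn from fresh, mutually independent traps $\pi_x$ that are themselves independent of the path of $Y$. A coupling that replaces waiting times at revisited sites by freshly drawn copies, combined with the analysis already developed in the proof of \thmref{thm:general}, should make this decorrelation quantitative and produce the required independent joint limit.
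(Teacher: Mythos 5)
Your overall strategy matches the paper's: bootstrap convergence of $X_N$ to convergence of $S_N$, invoke Theorem~\ref{thm:general}, then pass to the limit in the composition $Y_N\circ S_N^{-1}$. The independence argument you sketch—re-randomising waiting times at revisited sites, using the sparsity of revisits guaranteed by Assumption~\ref{ass:slowvar}—is indeed the mechanism behind the construction of $\tilde S$ in the proof of Lemma~\ref{lem:indincr}, and the paper reuses precisely that construction to show that $(B,\cS)$ has jointly independent increments. So in spirit the plan is right. However, there are two genuine gaps where you rely on intuition at exactly the places where a non-trivial device is needed.

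The first gap is the step from convergence of $X_N$ to convergence of $S_N$. Your heuristic that $|X_N(t)|^2$ is of the same order as $S_N^{-1}(t)$ is correct, but the quantity $S_N^{-1}$ is not a continuous functional of $X_N$ in the $J_1$-topology, and ``tightness by contradiction'' does not give you convergence of $S_N$, let alone in a topology strong enough to feed into Theorem~\ref{thm:general} and into the later composition step. The paper's mechanism here is essential and you do not have a substitute: Lemma~\ref{lem:martingale} shows that $X_N$ is a local martingale with bounded increments (using Assumption~\ref{ass:BM}, finite range, and Doob's optional sampling), so one can invoke \cite[Corollary~VI.6.29]{JS03}, which upgrades distributional convergence of local martingales to joint convergence with their quadratic variations. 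Lemma~\ref{lem:quadvar} then identifies $\operatorname{trace}[X_N,X_N]_t$ with $\sigma^2 S_N^{-1}(t)$ up to $o(1)$, and inversion in the $M_1'$-topology (\cite[Theorems~13.6.1--13.6.2]{W02}) finally yields convergence of $S_N$. Without the martingale structure you have no way to recover a hidden process like $S_N^{-1}$ from the distributional limit of $X_N$. A related issue: working with subsequential limits of $S_N$ requires an argument that all subsequential limits agree before you can conclude anything about the full sequence; the paper sidesteps this entirely by producing a genuine limit via Proposition~\ref{prop:convclock}.

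The second gap is the independence of $B$ and $V_\alpha$ in case~(ii). Establishing that $(Y_N,S_N)$ converges jointly and that increments of the limit clock over disjoint time intervals are independent of the Brownian increments over the complementary time set is exactly what the $\tilde S$ coupling buys you, and you have that idea. But converting ``jointly independent increments of $(B,\cS)$'' into ``$B$ and $\cS$ are independent processes'' is not automatic: you need the structural fact that $\cS$ is a pure-jump step process and $B$ is continuous, so that $\Delta B\cdot\Delta\cS=0$, and then a result like \cite[Lemma~15.6]{K02} to conclude independence. Without citing or reproving such a lemma, your argument stops one step short of the required independence.
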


Let us make a few remarks about our setting and
results. The definition of the RTRW we give here is slightly more general
than the one in \cite{BCCR13} since we allow the discrete skeleton to be
more general than the simple random walk only. Assumption~\ref{ass:slowvar}
on the discrete skeleton is taken from \cite{FM}. This assumption can be
used to show weak laws of large numbers for the range of the random walk
and for some related quantities. We would like to point out that the only
place in the proof of Theorem~\ref{thm:general}
where we use $\ZZ^d$-specific properties of the random walk is in
the derivation of these laws of large numbers. In particular,
\thmref{thm:general} classifying the possible scaling limits of the clock
process can be shown to hold for the RTRW on any countable state space
where the discrete-time skeleton is a Markov chain satisfying such laws
of large numbers for the range and the related quantities.

Our setting generalizes several previous results, let us mention some of
them. Mostly, the models studied in the literature involve trapped random
walks with some kind of heavy-tailed waiting times, with the aim to show
convergence of rescaled clock processes to an $\alpha$-stable subordinator.

In the so-called continuous-time random walk (CTRW), introduced in
\cite{MW65}, all $\pi_x$ are deterministically identical heavy-tailed
probability distributions, that is for some $\alpha\in(0,1)$ and $c>0$, and
some slowly varying function $\ell$,
\begin{equation} \label{e:ctrw}
	\pi_x[u,\infty) = u^{-\alpha} \ell(u) \text{ as } u\to\infty.
\end{equation}
Independently of the nature of the discrete skeleton $Y$, the clock
process is then a sum of i.i.d.~heavy-tailed random variables, and it is
well known that it converges after normalization to a stable subordinator.
The scaling limits of the CTRW were studied in more detail in \cite{MS04}.

In the symmetric Bouchaud trap model (BTM) the discrete skeleton $Y$ is
simple random walk and the $\pi_x$ are exponential random variables with
means $m_x$ that are i.i.d.~heavy-tailed random variables satisfying e.g.
\begin{equation} \label{e:btm1}
	\PP[m_x>u] = c u^{-\alpha} (1+o(1)) \text{ as } u\to\infty,
\end{equation}

%or by setting
%\begin{equation} \label{e:btm2}
%	m_x=e^{\beta E_x},
%\end{equation}
%where the $E_x$ are i.i.d.~standard normal or exponential with mean 1.
The BTM on $\ZZ^2$  was for the first time studied  in \cite{BCM06} where
the authors show convergence of the clock process to a stable
subordinator and use this to derive aging properties of the model. In
\cite{BC07} it is then shown, in the case of the BTM on $\mathbb Z^d$,
$d\ge 2$, that the rescaled random walks and clock processes converge
jointly to a Brownian motion and a stable subordinator, and therefore the
scaling limit of the BTM is the Fractional Kinetics process.

A general model of trapped random walk where the waiting times are
exponential with heavy-tailed means as in \eqref{e:btm1} is studied in
\cite{FM}. As mentioned above, they consider the discrete skeleton to be
an arbitrary random walk on $\ZZ^d$ satisfying
Assumption~\ref{ass:slowvar}. Instead of scaling limits, which require
additional restrictions as in our Assumption~\ref{ass:BM}, \cite{FM}
focus on the so-called age process, which is related to the clock process
and describes the `depth of the trap in which the process stays at a
given time'.

Our setting is restricted to the fact that the discrete skeleton $Y$ is
independent of the random environment $\pi $. There are however
interesting models where this is not the case, for example the asymmetric
Bouchaud trap model (ABTM). In \cite{BC11} for $d\ge3$ and in \cite{M11}
with different methods for $d\ge5$ it is shown that the scaling limit for
ABTM is also Fractional Kinetics. Yet another approach to prove
convergence of rescaled clock processes to a stable subordinator is given
in \cite{GS13}, their setting includes the ABTM as a special case.

%It remains an open question whether one could
%define a more general randomly trapped random walk model in a way such that
%the discrete skeleton and the environment are allowed to be dependent, thus
%generalizing such asymmetric models.

The majority of the above mentioned previous results are quenched, that is
the convergence holds for almost every realisation of the environment. On
the contrary, our results are annealed, that is averaged over the
environment, but this is not an issue for the classification theorem.

We also believe that when the annealed convergence takes place as in
Theorem~\ref{thm:general}, then the quenched convergence holds true as
well. In high dimensions ($d\ge 5$) this could be proved  similarly as in
\cite{M11}, using techniques from \cite{BS02}, see also the additional
condition in \cite{FM} under which their annealed result holds quenched.
In low dimensions these methods fail due to many self-intersections
of the discrete skeleton. An adaptation of more complicated
methods which give the quenched convergence in low dimensions
(like the coarse-graining procedure of \cite{BCM06,BC07} or
the techniques of \cite{GS13}) to the RTRW seems to be non-trivial and is
out of the scope of this paper.

\medskip

We conclude the introduction by giving sufficient conditions for convergence in
both cases of our main theorems. Given the collection of probability
measures $\pi=(\pi_x)_{x\in\ZZ^d}$, let
$m_x=\int u \pi_x(du)\in(0,\infty]$ be the mean and
$\hat{\pi}_x(\lambda)=\int e^{-\lambda u} \pi_x(du)$ the Laplace transform
of $\pi_x$.
Note that in the next theorem Assumption~\ref{ass:slowvar} is not needed,
we only need $Y$ to be non-degenerate.

\begin{theorem} \label{thm:brown}
  Let $X$ be RTRW in $d\geq1$. If $\nu \neq \delta_0$ and the annealed expected waiting time is
  finite, $\EE[m_0] =M <\infty$, then the rescaled clock processes
  $S_N$ with normalization $a_N=N$ converge in $\PP$-distribution on
  $D^1$ equipped with the Skorohod $J_1$-topology to the linear process $\cS(t)=Mt$. If in addition
  Assumption~\ref{ass:BM} holds, then the rescaled processes $X_N$ with
  $a_N=N$ converge in $\PP$-distribution on $D^d$ equipped with the Skorohod $J_1$-topology, and the limit is
  $\cX(t)=B(M^{-1}t)$ as in $(i)$ of \thmref{thm:class}.
\end{theorem}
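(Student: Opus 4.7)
The plan is to first prove (a) the pointwise convergence $S(n)/n \to M$ in $\PP$-probability, then (b) the process-level $J_1$-convergence $S_N \Rightarrow (t\mapsto Mt)$ on $D^1$, and finally (c) the RTRW convergence, obtained by composing Donsker's invariance principle with the deterministic time change. Under the annealed law each waiting time $T_k$ satisfies $\EE[T_k] = \EE[m_{Y(k)}] = M$, so $\EE[S(n)] = nM$; the difficulty in (a) is that the $T_k$ are neither independent (the skeleton may revisit a vertex and resample from the same random $\pi_x$) nor guaranteed to have finite variance.

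Both issues can be handled by truncation. Setting $T_k^{(K)} := T_k \wedge K$ and $S^{(K)}(n) := \sum_{k<n} T_k^{(K)}$, and grouping the truncated waiting times by the vertex at which they occur, yields the local-time decomposition $S^{(K)}(n) = \sum_{x\in\ZZ^d}\sum_{j=1}^{L_n(x)} \xi_j^{(x),K}$, where $L_n(x) = \#\{k<n : Y(k)=x\}$ and, conditionally on $\pi_x$, the $\xi_j^{(x),K}$ are i.i.d.\ samples from $\pi_x$ truncated at $K$. Conditional on $Y$, the contributions from distinct vertices are independent, and the law of total variance gives
\begin{equation*}
  \Var[S^{(K)}(n) \mid Y] \leq K^2 n + K^2 \sum_{x} L_n(x)^2.
\end{equation*}
Since $\sum_x L_n(x)^2 = \sum_{k,j<n} \Ind{Y(k)=Y(j)}$ and $p_k(0,0) \to 0$ whenever $\nu \neq \delta_0$, one has $\EE\bigl[\sum_x L_n(x)^2\bigr] = o(n^2)$, so Chebyshev yields $S^{(K)}(n)/n \to M_K := \EE[m_0 \wedge K]$ in $\PP$-probability. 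To remove the truncation I would combine this with the deterministic bound
\begin{equation*}
  |S(n)/n - M| \leq \bigl(S(n)-S^{(K)}(n)\bigr)/n + |S^{(K)}(n)/n - M_K| + (M - M_K)
\end{equation*}
and the identity $\EE\bigl[(S(n)-S^{(K)}(n))/n\bigr] = M - M_K$, which is uniform in $n$ and vanishes as $K\to\infty$ by monotone convergence; Markov's inequality then controls the first term. Sending $n \to \infty$ and then $K \to \infty$ completes (a).

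Finite-dimensional convergence of $S_N$ to $t\mapsto Mt$ follows from (a), and since each $S_N$ is non-decreasing and the limit is continuous, tightness in the Skorohod $J_1$-topology is standard, proving (b). For (c), Assumption~\ref{ass:BM} yields $Y_N \Rightarrow B$ by Donsker's invariance principle; because $S_N$ has a deterministic limit, joint convergence $(Y_N, S_N) \Rightarrow (B, Mt)$ holds automatically. The right-continuous inverse is $J_1$-continuous at continuous strictly increasing functions, so $S_N^{-1} \Rightarrow t/M$; the composition $(\varphi,\psi)\mapsto \varphi\circ\psi$ is $J_1$-continuous at pairs where the outer function is continuous (Brownian paths are a.s.\ continuous) and the inner function is continuous and strictly increasing. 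Continuous mapping then gives $X_N = Y_N \circ S_N^{-1} \Rightarrow B(\cdot/M)$ in $D^d$.

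The main obstacle is (a): the combination of annealed correlations at revisited vertices with potentially infinite waiting-time variances precludes any naive law of large numbers, and the truncation-plus-local-time decomposition above is what circumvents both issues. Once (a) is in hand, the remainder is soft, relying on monotone tightness of $S_N$, Donsker's invariance principle under Assumption~\ref{ass:BM}, and standard continuous-mapping arguments for composition with the time change.
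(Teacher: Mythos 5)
Your argument is correct but follows a genuinely different route from the paper's. The paper's proof of this theorem is a one-liner once Lemma~\ref{lem:ergod} is available: that lemma shows (via a strong-mixing argument whose key ingredient is $\PP[Y(n)=0]\to0$) that the sequence $\tilde\tau_k=\tau_{Y(k)}^{L(Y(k),k)}$ of successive waiting times is stationary and ergodic, and Birkhoff's theorem then gives $S(\floor{Nt})/N\to Mt$ almost surely. You instead establish $S(n)/n\to M$ in $\PP$-probability directly, by truncating at level $K$, grouping by vertex, and bounding $\Var[S^{(K)}(n)\mid Y]$ through $\sum_x L_n(x)^2 = \sum_{j,k<n}\Ind{Y(j)=Y(k)}$, whose expectation is $o(n^2)$ precisely because $\PP[Y(n)=0]\to0$. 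Both approaches thus rest on the same non-degeneracy fact but package it differently: the paper's ergodicity lemma gives the stronger a.s.\ statement and is reused elsewhere (e.g.\ in Lemma~\ref{lem:propclock} for stationarity of the increments of $\cS$), so the cost of proving it is amortized; your argument is more self-contained and elementary (no measure-preserving dynamics, only Chebyshev and a local-time second moment), at the cost of the truncation bookkeeping. Your steps (b) and (c) coincide with the paper's soft arguments (monotone convergence to a continuous limit, Donsker, joint convergence to a deterministic limit, continuity of inversion and composition).

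One small slip worth fixing: in your truncation step you define $M_K:=\EE[m_0\wedge K]$, but the mean of $S^{(K)}(n)/n$ and the correct constant for the identity $\EE[(S(n)-S^{(K)}(n))/n]=M-M_K$ is $M_K=\EE\bigl[\int (u\wedge K)\,\pi_0(du)\bigr]$; the two differ in general (the latter is $\le$ the former). Since both tend to $M$ as $K\to\infty$ by monotone convergence, this does not affect the conclusion, but the identity as written only holds with the corrected $M_K$.
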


For convergence to Fractional Kinetics we have the following sufficient
criterion. In \secref{sec:proofsuff} we will sketch some examples of RTRWs
that satisfy this criterion with different functions $f$.

\begin{theorem}
  \label{thm:fk}
  Let $X$ be RTRW with discrete skeleton $Y$ satisfying
  Assumption~\ref{ass:slowvar} with slowly varying function $\ell^{*}$.
  Assume that there is a normalizing sequence $a_N\nearrow\infty$
  such that for any positive real number $r>0$ and a continuous
	function $f$,
  \begin{equation} \label{cond:fk}
    - \log \EE\left[\hat{\pi}_0(\lambda/a_N)^{r\ell^{*}(N)}\right] =
    f(r)\lambda^{\alpha}\frac{\ell^{*}(N)}{N} (1+o(1)) \text{ as }
    N\to\infty.
  \end{equation}
  %for some function $f$ satisfying $f(r)\leq e^{cr}$ for some $0<c<-\gamma\log(1-\gamma)$ if
  %$Y$ is transient and $\gamma=\lim_{n\to\infty} r_n$, and $0<c<1$ if $Y$ is recurrent.
  Then the rescaled clock processes $S_N$ with normalization $a_N$
  converge in $\PP$-distribution on $D^1$ equipped with the Skorohod $M_1$-topology
  to an $\alpha$-stable subordinator $V_{\alpha}$. If in addition
  Assumption~\ref{ass:BM} holds, then the rescaled processes $X_N$
  converge in $\PP$-distribution on $D^1$ equipped with the Skorohod $J_1$-topology,
  and the limit is the FK process as in $(ii)$ of \thmref{thm:class}.
\end{theorem}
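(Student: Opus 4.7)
The strategy has two main parts: first, establish convergence of the clock process $S_N \to V_\alpha$ by a Laplace transform calculation that reduces to a law of large numbers for occupation times; second, deduce convergence of the RTRW by jointly analyzing $(Y_N, S_N)$ and composing with the right-continuous inverse.

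\textbf{Laplace transform of the clock.} Conditionally on the skeleton $Y$ and the environment $\pi$, the waiting times at distinct visits are independent, and the $i$-th visit to $x$ consumes an independent draw from $\pi_x$. Hence, with $L_N(x) = \#\{0\le k < N : Y(k)=x\}$,
\begin{equation*}
  \EE\big[e^{-\lambda S(N)/a_N}\big] = \EE_Y\Big[\prod_{x\in\ZZ^d} \EE_\pi\big[\hat\pi_0(\lambda/a_N)^{L_N(x)}\big]\Big] = \EE_Y\big[\exp(-\Psi_N(Y))\big],
\end{equation*}
where $\Psi_N(Y) := \sum_x g_N(L_N(x))$ with $g_N(k) := -\log\EE[\hat\pi_0(\lambda/a_N)^k]$. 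The hypothesis \eqref{cond:fk} is precisely the asymptotic $g_N(r\ell^*(N)) = f(r)\lambda^\alpha \ell^*(N)/N (1+o(1))$, uniformly enough on compacts in $r$.

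\textbf{Law of large numbers for occupation times.} Writing $L_N(x) = \eta_N(x)\ell^*(N)$ one expects
\begin{equation*}
  \Psi_N(Y) \;\approx\; \lambda^\alpha\, \frac{\ell^*(N)}{N} \sum_{x \in \mathrm{Range}(Y[0,N))} f(\eta_N(x)),
\end{equation*}
and because $|\mathrm{Range}(Y[0,N))| \sim N/\ell^*(N)$ under Assumption~\ref{ass:slowvar}, this should converge in $\PP$-probability to $C\lambda^\alpha$ where $C$ is the average of $f$ under the limiting empirical distribution of the normalized occupation times $\eta_N(x)$. This is exactly the type of LLN for the range and its functionals that, as remarked after \thmref{thm:general}, is available under Assumption~\ref{ass:slowvar} and on which the proof of that classification already relies; the continuity of $f$ combined with truncation estimates suffices to pass the LLN through $\sum_x f(\eta_N(x))$. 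This identifies the one-dimensional limit of the annealed Laplace transform as $e^{-C\lambda^\alpha}$. Applying the same calculation to increments on $[Nt_i, Nt_{i+1})$, which under $\PP_Y$ are conditionally independent by the Markov property of $Y$ and the i.i.d.~structure of $\pi$, gives stationary independent increments in the limit, so $S_N \Rightarrow V_\alpha$ in the sense of finite-dimensional distributions. Tightness in $D^1$ with the $M_1$-topology is automatic for the non-decreasing processes $S_N$, so this upgrades to $M_1$-functional convergence.

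\textbf{From clock to RTRW.} Under Assumption~\ref{ass:BM}, $Y_N \Rightarrow B$ on $(D^d, J_1)$. The key point is that the conditional Laplace transform of $S_N$ given $Y$ depends on $Y$ only through the occupation-time functionals $\sum_x f(L_N(x)/\ell^*(N))$, which concentrate on deterministic limits by the LLN and are therefore asymptotically independent of $Y_N$ itself. This yields joint convergence $(Y_N, S_N) \Rightarrow (B, V_\alpha)$ on the product space $(D^d, J_1)\times(D^1, M_1)$ with $B$ and $V_\alpha$ \emph{independent}. Applying the composition map $(y,s)\mapsto y\circ s^{-1}$, which is continuous at $(B, V_\alpha)$ because $V_\alpha$ is strictly increasing almost surely and $B$ has continuous sample paths, produces $X_N \Rightarrow B(V_\alpha^{-1}(\cdot))$ on $(D^d, J_1)$, i.e.~the Fractional Kinetics process of case $(ii)$ of \thmref{thm:class}.

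\textbf{Main obstacle.} The principal difficulty is Step 2, namely isolating the right LLN for functionals of $L_N(x)/\ell^*(N)$ over the range and justifying the substitution of the asymptotic $g_N(r\ell^*(N))\sim f(r)\lambda^\alpha\ell^*(N)/N$ inside the sum (where $r = L_N(x)/\ell^*(N)$ varies with $x$); this requires uniform control of the hypothesis \eqref{cond:fk} on compact intervals of $r$ and a truncation to manage the tails of the empirical distribution of occupation times. Once this is handled, the asymptotic independence of $Y_N$ and $S_N$ needed in Step 3 is a direct consequence, since the entire $Y$-dependence of the clock's Laplace transform is concentrated in a deterministic quantity in the limit.
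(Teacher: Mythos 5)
Your Laplace-transform reduction, the intended law of large numbers for occupation-time functionals, and the composition argument are all the right ingredients, and your computation of the conditional Laplace transform
\begin{equation*}
  \EE\big[e^{-\lambda S(\floor{Nt})/a_N}\mid Y\big]
  = \prod_{x}\EE\big[\hat\pi_0(\lambda/a_N)^{L(x,\floor{Nt}-1)}\big]
\end{equation*}
matches the paper's starting point exactly. The paper then layers the range into level sets $R^k(Nt)$ (transient) or $R^k_\beta(Nt)$ (recurrent), uses \lemref{lem:llnrangegeneral} together with \eqref{cond:fk} and the continuity of $f$ to get the finite truncation, and controls the tail by Jensen's inequality and the exponential decay $\PP[|R^k(Nt)|/(Nt)\ge e^{-c_1 k}]\le e^{-c'k}$; this is precisely the truncation/uniformity work you flag as the main obstacle, and it can be done.

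However, there is a genuine gap in your passage from one-dimensional to finite-dimensional convergence. You claim that the increments of $S$ on disjoint intervals $[\floor{Nt_i},\floor{Nt_{i+1}})$ are \emph{conditionally independent given $Y$}. This is false: if the walk visits a vertex $x$ in two different time intervals, both contributions draw from the same trap $\pi_x$, so given $Y$ alone (not $\pi$) the increments are dependent. Establishing asymptotic independence of increments is exactly what \lemref{lem:indincr} does, and it is far from automatic — it needs \lemref{lem:llnmultincr}, \lemref{lem:largelocaltime}, and \lemref{lem:badtimes} to control the vertices visited in multiple intervals and the frequently visited vertices. The paper avoids redoing this in the proof of \thmref{thm:fk} by the following shortcut, which you should use too: prove only the one-dimensional convergence
\begin{equation*}
  \lim_{N\to\infty}\EE\big[e^{-\lambda S_N(t)}\big]=e^{-ct\lambda^\alpha},
\end{equation*}
and then invoke \thmref{thm:general}. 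That theorem (via its supporting Lemmas~\ref{lem:propclock}--\ref{lem:indincr}) already shows that whenever the marginals $S_N(t)$ converge to a non-trivial limit, that limit has stationary, independent increments and is self-similar, hence is a subordinator; the $\alpha$-stable marginal then identifies it as $V_\alpha$. Your direct route would require re-proving the independent-increments lemma, which your conditional-independence claim does not deliver.

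A smaller issue: your argument for independence of $B$ and $V_\alpha$ in the limit (that the conditional Laplace transform of $S_N$ given $Y$ concentrates on a deterministic quantity) is a reasonable heuristic, but the paper instead constructs the auxiliary clock $\tilde S$ in the proof of \lemref{lem:indincr}, notes that each increment of $\tilde S$ is independent of the walk increments outside the corresponding time window, and then applies \cite[Lemma~15.6]{K02} to conclude independence of the two limits. Your version would need a precise statement of the concentration and a proof that it implies decoupling of the joint finite-dimensional laws, which is more work than invoking the machinery already developed for \thmref{thm:class}.
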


The rest of this paper is structured as follows. In
\secref{sec:modelandresults} we give precise definitions of the model and
introduce some notation used through the paper. \thmref{thm:general} and
\thmref{thm:class}  are proved in Sections~\ref{sec:proofgen}
and~\ref{sec:proofz} respectively, and Section~\ref{sec:proofsuff} deals
with Theorems~\ref{thm:brown} and \ref{thm:fk}. Finally, in
\secref{sec:proofs} we prove one technical lemma which is used in the
proof of Theorem~\ref{thm:general}. In Appendix~\ref{appendix} we explain
how Assumption~\ref{ass:slowvar} on the escape probability implies the
laws of large numbers that we mentioned above.

\medskip

\begin{acknowledgment}
The authors would like to thank the referee for carefully reading the
manuscript and giving important comments that helped to improve the paper.
\end{acknowledgment}

%---------------------------------------------------------------------------------------------------------------------------------------------------------
%---------------------------------------------------------------------------------------------------------------------------------------------------------
%---------------------------------------------------------------------------------------------------------------------------------------------------------
%---------------------------------------------------------------------------------------------------------------------------------------------------------

\section{Setting and notations}%<<<1
\label{sec:modelandresults}

We start by giving a formal definition of the RTRW.
Recall that $\nu $ is a probability measure on $\mathbb Z^d$ and $\mu $ a
probability measure on the space of probability measures on $(0,\infty)$.
To avoid trivial situations, we assume that $\nu \neq \delta_0$.

Given $\mu $ and $\nu $, let $\pi = (\pi_x)_{x\in \mathbb Z^d}$ be an
i.i.d.~sequence of probability measures with marginal $\mu $, and
$\xi = (\xi_i)_{i\ge 1}$ an i.i.d.~sequence with marginal $\nu $ independent of
$\pi $ defined on some probability space
$(\Omega , \mathcal F, \mathbb P)$.
Define
\begin{equation*}
  Y(n)=\xi_1+\dots+\xi_n
\end{equation*}
to be a random walk with step distribution $\nu $ and denote by
$L(x,n)=\sum_{k=0}^n \Ind{Y(k)=x}$ its local time.

Given a realisation of
$\pi $, let further $(\tau_x^i)_{x\in \mathbb Z^d,i\ge 1}$ be a collection
of independent random variables, independent of $\xi $, such that every
$\tau_x^i$ has distribution $\pi_x$, defined on the same probability
space. The clock process of the RTRW, $S:\mathbb N\to[0,\infty)$ is then
defined by $S(0)=0$ and
\begin{equation*}
  S(n) = \sum_{x\in\ZZ^d}
  \sum_{i=1}^{L(x,n-1)} \tau_x^i = \sum_{k=0}^{n-1} \tau_{Y(k)}^{L(Y(k),k)}
  \quad \text{ for }n\geq1.
\end{equation*}
Finally, we define the RTRW $X=(X(t))_{t\ge 0}$ by
\begin{equation*}
  X(t) = Y(k) \qquad \text{ for } S(k) \leq t < S(k+1),
\end{equation*}
or equivalently
\begin{equation*}
  X(t) = Y(S^{-1}(t)),
\end{equation*}
where $S^{-1}(t) = \inf\{k\geq0:~S(k) > t\}$ is the right-continuous
inverse of $S$.

Under $\mathbb P$, the process $X$ has exactly the law described in the
introduction. The random variable $\tau_x^i$ denotes the duration of the
$i$-th visit of the vertex $x$.
We refer to $\mathbb P$ as
\emph{annealed} distribution of $X$.
%We call the law of $X$ given the
%sequence $\pi $ the \emph{quenched} law, and write
%$\mathbb P^\pi[\,\cdot\,] = \mathbb P[\, \cdot\, | \pi ]$.

We write $D^d$ for the space of the $\mathbb R^d$-valued cadlag functions on
$[0,\infty)$, and when needed $D^d(J_1)$, $D^d(M_1)$, $D^d(M_1')$ to
point out which of Skorohod topologies we use on this space. We refer to
\cite[Chapter~3.3]{W02} for an introduction and \cite[Chapters~12--13]{W02}
for details on these topologies. The less usual $M_1'$-topology, which plays a role only
in \propref{prop:convclock}, is a modification of the Skorohod $M_1$-topology
which is convenient for dealing  with irregularities at the origin, see
\cite[Section~13.6.2]{W02}. In any case, we will never need to know
the actual definitions of these topologies, we only use them when
applying results from \cite{W02}.

It will be useful to introduce the sequence of successive waiting times
\begin{equation*}
  \tilde \tau_k = \tau_{Y(k)}^{L(Y(k),k)}, \qquad k\ge 0.
\end{equation*}
With this notation,
\begin{equation}
  \label{e:Stildetau}
  S(n)=\sum_{k=0}^{n-1} \tilde \tau_k.
\end{equation}

We now show that $\tilde \tau_k$ is ergodic, which will be used in the
proof of \thmref{thm:brown}. To this end let
$\mathcal P'$ be the law on $\Omega ':=[0,\infty)^{\mathbb N}$ of the sequence
$(\tilde \tau_k)_{k\ge 0}$ and let $\theta $ be the left shift on
$\Omega '$,  $\theta (\tilde \tau_1, \tilde \tau_2,\dots) = (\tilde\tau_2,\tilde \tau_3,\dots)$.

\begin{lemma}
  \label{lem:ergod}
  The left-shift $\theta$ acts ergodically
  on $(\Omega',\cP')$.
\end{lemma}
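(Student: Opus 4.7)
The plan is to realise the shift $\theta$ on $(\Omega',\cP')$ as a factor of a measure-preserving ergodic transformation $T$ on the underlying probability space $(\Omega,\cF,\PP)$ carrying the triple $(\pi,\xi,\tau)$; ergodicity of $\theta$ is then immediate from ergodicity of $T$.

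The natural candidate is the ``advance the RTRW by one step'' map. Given $\omega=(\pi,\xi,\tau)$, I would set $T\omega=(\pi',\xi',\tau')$, where $\xi'_i=\xi_{i+1}$ shifts the increments, $\pi'_x=\pi_{x+\xi_1}$ translates the environment so the particle sits at the new origin, and the waiting times are re-indexed by $\tau'^{i}_x=\tau^{i}_{x+\xi_1}$ for $x\ne -\xi_1$, $\tau'^{i}_{-\xi_1}=\tau^{i+1}_0$ (thereby discarding the already-consumed sample $\tau^1_0$). A direct computation using $Y(k,T\omega)=Y(k+1,\omega)-\xi_1$ together with \eqref{e:Stildetau} and a careful tracking of the local times through the spatial shift yields $\tilde\tau_k(T\omega)=\tilde\tau_{k+1}(\omega)$, so the factor map $\phi:\omega\mapsto(\tilde\tau_k(\omega))_{k\ge 0}$ intertwines $T$ and $\theta$, i.e.~$\phi\circ T=\theta\circ\phi$, and pushes $\PP$ forward to $\cP'$.

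Measure-preservation of $T$ is routine: the shift on $\xi$ preserves $\nu^{\otimes\NN}$, the spatial translation preserves the product law $\mu^{\otimes\ZZ^d}$ on environments, and the bijective re-indexing of the $\tau^{i}_x$'s preserves their conditional law, given the environment, as independent $\pi_x$-samples, because the consumed coordinate $\tau^1_0$ is independent of every coordinate appearing in $T\omega$.

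The main obstacle is ergodicity of $T$. My preferred route is to split the argument in two. First I would establish ergodicity of the projected ``environment viewed from the particle'' dynamical system $(\pi,\xi)\mapsto\bigl((\pi_{y+\xi_1})_{y\in\ZZ^d},(\xi_{i+1})_{i\ge 1}\bigr)$ on the product measure $\mu^{\otimes\ZZ^d}\otimes\nu^{\otimes\NN}$, which is classical for i.i.d.~environments and non-degenerate step distribution $\nu\ne\delta_0$: the walk visits infinitely many distinct vertices and thereby ``mixes'' the environment, forcing any invariant function to be translation invariant in every direction of the subgroup generated by $\mathrm{supp}(\nu)$, hence $\PP$-a.s.~constant. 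Secondly, I would extend ergodicity to the full $(\Omega,\cF,\PP)$ by conditioning on $(\pi,\xi)$ and applying Kolmogorov's $0$-$1$ law to the conditionally i.i.d.~waiting times $(\tau^{i}_x)_{x,i}$, using that $T$ permutes them through an injective relabelling so that, conditionally, any $T$-invariant event lies in the tail $\sigma$-algebra. An alternative, slightly cleaner route is to show directly that $T$ is isomorphic to a Bernoulli shift by coding each application of $T$ in terms of the independent fresh randomness (the increment $\xi_1$ together with the pair $(\pi_{Y(k)},\tau^{L(Y(k),k)}_{Y(k)})$ or the lone sample $\tau^{L(Y(k),k)}_{Y(k)}$, according to whether $Y(k)$ is new or already visited) that it consumes.
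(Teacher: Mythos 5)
Your measure-preservation argument is the same as the paper's: you lift $\theta$ to the point-of-view-of-the-particle map on $(\Omega,\cF,\PP)$ (your $T$ is literally the paper's $\Theta$), check it preserves $\PP$ from the i.i.d.\ structure, and deduce $\theta$ preserves $\cP'$ as a factor. Where you diverge is ergodicity. The paper never establishes ergodicity of $T$; it instead verifies strong mixing of $\theta$ directly by a concrete cylinder-set computation: conditioning on whether the windows $I+n$ and $J$ of the walk hit a common vertex (an event $\cI_n(A,B)$ whose probability vanishes because $\PP[Y(n)=0]\to 0$), the two blocks of $\tilde\tau$'s are conditionally independent given the walk, and the estimate \eqref{eq:mixmix} closes the argument. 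Your route aims at the stronger statement that the full lift $T$ is ergodic, via two sub-steps: (a) ergodicity of the environment-viewed-from-particle map $\bar T$ on $\mu^{\otimes\ZZ^d}\otimes\nu^{\otimes\NN}$, and (b) a conditional Kolmogorov $0$--$1$ law for the waiting-time noise. This decomposition is a genuine alternative and conceptually clean, but as written it has two soft spots. Step (a) is a real theorem (Kozlov-type ergodicity of the point-of-view process in i.i.d.\ environment with non-degenerate $\nu$), and here it must be proved or cited; it is not ``immediate.'' Step (b) is stated a bit loosely: a $T$-invariant event does not obviously lie in a conditional tail of the doubly-indexed array $(\tau^i_x)_{x,i}$, because $T$ mostly relabels rather than discards coordinates. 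What does work cleanly is to run the $0$--$1$ law at the level of the process $(\tilde\tau_k)_{k\ge 0}$: conditionally on $(\pi,\xi)$ these are independent (they read off distinct coordinates $(Y(k),L(Y(k),k))$ of the array), and a $\theta$-invariant set lies in the tail $\sigma(\tilde\tau_n,\tilde\tau_{n+1},\dots)$ for every $n$, hence has conditional probability in $\{0,1\}$; one then only needs $\bar T$-ergodicity to promote this to unconditional triviality. So the scheme is salvageable, but compared with the paper's self-contained mixing computation it outsources one nontrivial ergodicity fact and needs the $0$--$1$ law to be set up on the right sequence. The upside of your route is that it proves the stronger statement that the lift $T$ itself (equivalently, the full ``RTRW viewed from the particle'') is ergodic, which the paper's mixing argument does not attempt.
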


\begin{proof}
  To show that $\theta $ is measure-preserving we follow
  the environment as `viewed from the particle'. Namely, let
  $\Theta:\Omega \to \Omega $ be such that
  if $\omega' = \Theta(\omega )$, then
  \begin{equation*}
    \begin{split}
      &\xi_i(\omega')=\xi_{i+1}(\omega ), \qquad i\ge 1,
      \\&\pi_x(\omega')=\pi_{x+\xi_1(\omega )}(\omega ), \qquad x\in \mathbb
      Z^d,
      \\&\tau_x^i(\omega') = \begin{cases}
        \tau_{x+\xi_1(\omega )}^i,\qquad & \text{if }x\neq -\xi_1(\omega),
        \\\tau_{x+\xi_1(\omega )}^{i+1},&\text{if }x=-\xi_1(\omega ).
      \end{cases}
    \end{split}
  \end{equation*}
  From the independence of $\xi $ from $\pi $ and $\tau $, and from the
  i.i.d.~properties of $\pi $ and $\tau_x^\cdot$ for every $x$, it is easy
  to see that the law of $X\circ\Theta$ agrees with the law of $X$,
  that is $\Theta$ is $\mathbb P$-preserving. Since, in addition,
  $\tilde \tau (\Theta (\omega ))= \theta (\tilde \tau (\omega ))$ and
  $\mathcal P' = \mathbb P \circ \tilde \tau^{-1}$, this implies that
  $\theta$ is $\mathcal P'$-preserving.

  To prove the ergodicity, we show that $\theta $ is strongly mixing. To
  this end it is sufficient to verify that
  \begin{equation}
    \label{def:mixergodic}
    \left|\cP'[\theta ^{-n}A\cap B] - \cP'[A]\cP'[B]\right|
			\xrightarrow{n\to\infty} 0
  \end{equation}
  for all cylinder sets $A=\{\tilde{\tau}_i \in A_i,~i\in I\}$,
  $B=\{\tilde{\tau}_j \in B_j,~j\in J\}$, where $I,J\subset\NN$ are
  finite sets and $A_i,B_j\subset\RR$ are Borel sets, see
  e.g.~\cite[Prop 2.5.3]{P83}. Fix two such sets $A$ and $B$ and define
	the event $\cI_n(A,B) = \{Y(i+n) = Y(j) \text{ for some } i\in I,
  j\in J\}$. Denote by $\cG_k^m=\sigma(\xi_{k+1},\xi_{k+2},\dots,\xi_m)$
	the $\sigma$-algebra generated by the steps made by the random walk
	between time $k$ and $m$, and write
	$\cG_{(n)}=\cG_{0}^{(\max I+n)\vee \max J}$,
	$\cG_{(I,n)}=\cG_{\min I + n}^{\max I+n}$,
	$\cG_{(J)}=\cG_{\min J}^{\max J}$.
	By the independence structure of the $\tau_x^i$ we have that
	\begin{equation} \label{eq:indAB}
		\cP'\left[\theta^{-n}A\cap B\mid \cG_{(n)}\right] \mathbf{1}_{\cI_n(A,B)^c}
		= \cP'\left[\theta^{-n}A\mid \cG_{(n)}\right]
			\cP'\left[B\mid \cG_{(n)}\right] \mathbf{1}_{\cI_n(A,B)^c}.
	\end{equation}
	Moreover,
	\begin{equation}\label{eq:depABonY}\begin{split}
		\cP'\left[\theta^{-n}A\mid \cG_{(n)}\right]
			&= \cP'\left[\theta^{-n}A\mid \cG_{(I,n)}\right], \\
		\cP'\left[B\mid \cG_{(n)}\right]
			&= \cP'\left[B\mid \cG_{(J)}\right].
	\end{split}\end{equation}
	By the independence of the $\xi_k$, as soon as $\max J<\min I + n$ the
	right hand sides of the above two equations are independent. Denote by
	$\cE'$ the 	expectation corresponding to $\cP'$. 	Using \eqref{eq:indAB},
	\eqref{eq:depABonY} and the independence of the two right hand sides in
	\eqref{eq:depABonY}, and the fact that $\cP'[ \theta^{-n}A]=\cP'[A]$, we
	have for $n$ large enough,
	\begin{equation} \label{eq:mixmix}\begin{split}
		\cP'\left[\theta^{-n}A\cap B\right]
			&= \cE'\left[\cP'\left[\theta^{-n}A\cap B\mid \cG_{(n)}\right]
				\left(\mathbf{1}_{\cI_n(A,B)}+ \mathbf{1}_{\cI_n(A,B)^c}\right)
					\right]\\
			&= \cE'\left[\cP'\left[\theta^{-n}A\mid \cG_{(n)}	\right]
				\cP'\left[B\mid \cG_{(n)}\right] \mathbf{1}_{\cI_n(A,B)^c}\right]
				+ O\left(\cP'[\cI_n(A,B)]\right)\\
			&= \cE'\left[\cP'\left[\theta^{-n}A\mid \cG_{(I,n)}	\right]
				\cP'\left[B\mid \cG_{(J)}\right]\right]
					+ O\left(\cP'[\cI_n(A,B)]\right) \\
			&= \cP'[\theta^{-n}A]\cP'[B]+ O\left(\cP'[\cI_n(A,B)]\right) \\
			&= \cP'[A]\cP'[B]+ O\left(\cP'[\cI_n(A,B)]\right)
	\end{split}\end{equation}
	But
	\begin{equation*}
		\cP'[\cI_n(A,B)] \leq \sum_{i\in I,j\in J} \PP[Y(i+n) = Y(j)],
	\end{equation*}
  and for $n$ large enough the Markov property for $Y$ implies that
  $\PP[Y(i+n) = Y(j)]=\PP[Y(i+n-j) = 0]$, which tends to $0$ as
  $n\to\infty$ for every (non-degenerate) random walk, see
  e.g.~\cite[P7.6]{S76}.
  Since $I$ and $J$ are finite, $\cP'[\cI_n(A,B)]\to0$ as $N\to\infty$,
	and thus \eqref{def:mixergodic} follows from \eqref{eq:mixmix}.
\end{proof}

%---------------------------------------------------------------------------------------------------------------------------------------------------------
%---------------------------------------------------------------------------------------------------------------------------------------------------------
%---------------------------------------------------------------------------------------------------------------------------------------------------------
%---------------------------------------------------------------------------------------------------------------------------------------------------------

\section{Proof of \thmref{thm:general}} %<<<1
\label{sec:proofgen}

In this section we prove \thmref{thm:general}.
In the next two lemmas we study the properties of the limit
clock process $\mathcal S$.

\begin{lemma}
  \label{lem:propclock}
	If the random walk $Y$ satisfies Assumption~\ref{ass:slowvar} and
	the rescaled clock processes $S_N$ converge to $\mathcal S$ in
	the way as stated in \thmref{thm:general},
  then the limit clock process $\cS$ has stationary increments and is
  self-similar with index $\rho>0$, i.e.~$\cS(t) \stackrel{d}{=}
  \lambda^{\rho}\cS(t/\lambda)$. Moreover, the normalizing sequence is of
  the form $a_N= N^{\rho} \ell(N)$, for the same $\rho>0$ and some slowly
  varying function $\ell$.
\end{lemma}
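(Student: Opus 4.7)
My plan is to split the proof into the two natural subclaims: stationary increments, derived from the ergodicity already established for the waiting time sequence; and self-similarity with a positive index, derived from a Lamperti-style convergence-of-types argument for the normalizing constants $a_N$. First I would invoke \lemref{lem:ergod}: the sequence $(\tilde\tau_k)_{k\ge 0}$ is stationary under $\PP$, so by the representation \eqref{e:Stildetau} the discrete-time clock process has stationary increments, $S(n+m)-S(n)\stackrel{d}{=}S(m)$. Applying this with $n=\floor{Ns}$ and $n+m=\floor{N(s+t)}$, dividing by $a_N$, and passing to the limit through \eqref{e:SN} at values of $s,t,s+t$ in the cocountable set of continuity points gives $\cS(s+t)-\cS(s)\stackrel{d}{=}\cS(t)$; the same manipulation applied to finite vectors of increments yields joint distributional stationarity.

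For the self-similarity, fix $\lambda>0$. On one hand, \eqref{e:SN} with argument $\lambda t$ gives $a_N^{-1}S(\floor{N\lambda t})\to\cS(\lambda t)$. On the other, \eqref{e:SN} along the subsequence $N'=\floor{\lambda N}$ gives $a_{N'}^{-1}S(\floor{N' t})\to\cS(t)$, and since $\floor{N't}$ differs from $\floor{N\lambda t}$ by an $O(1)$ error that is swallowed by $a_N\to\infty$ (after a routine monotonicity sandwich), I can rewrite the first convergence as $(a_{N'}/a_N)\cdot a_{N'}^{-1}S(\floor{N't})\to\cS(\lambda t)$. Provided $t$ is chosen so that $\cS(t)$ is not a.s.~zero, the convergence-of-types theorem then yields the existence of a positive finite limit $c(\lambda):=\lim_{N\to\infty} a_{\floor{\lambda N}}/a_N$ together with the distributional identity $\cS(\lambda t)\stackrel{d}{=}c(\lambda)\cS(t)$. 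Iterating this construction in $\lambda$ gives $c(\lambda_1\lambda_2)=c(\lambda_1)c(\lambda_2)$, and combined with the monotonicity of $c$ that is inherited from $a_N\nearrow\infty$ this multiplicative Cauchy equation forces $c(\lambda)=\lambda^\rho$ for some $\rho\in\RR$. The resulting relation $a_{\floor{\lambda N}}/a_N\to\lambda^\rho$ is precisely the definition of regular variation of index $\rho$, so $a_N=N^\rho\ell(N)$ for a slowly varying $\ell$.

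To conclude it remains to check $\rho>0$. The case $\rho<0$ is excluded immediately by $a_N\nearrow\infty$, while $\rho=0$ would give $\cS(t)\stackrel{d}{=}\cS(1)$ for every $t>0$; combined with the monotonicity of $\cS$ (inherited from $S_N$) and \eqref{e:Snontriv}, this forces $\cS(n)\uparrow\infty$ a.s.~while the distributions remain equal to the a.s.~finite law of $\cS(1)$, a contradiction. The main obstacle I expect is the convergence-of-types step: a priori $\cS(t)$ may be degenerate at a constant (or even at $0$) for small $t$, so the limit $c(\lambda)$ has to be extracted at some $t$ large enough that $\cS(t)$ is not concentrated at $0$; one then uses the stationary-increments structure to propagate this non-triviality, which in turn guarantees that the extracted scaling factor is intrinsic to the normalization $a_N$ and does not depend on the chosen $t$. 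Once this is done, Assumption~\ref{ass:slowvar} plays no role in the present lemma, and the representation $a_N=N^\rho\ell(N)$ with $\rho>0$ follows.
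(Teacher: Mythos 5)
Your proposal follows essentially the same two-step structure as the paper's proof: stationarity of the increments of $\cS$ from the stationarity of $(\tilde\tau_k)$ (\lemref{lem:ergod}) together with \eqref{e:Stildetau}, and self-similarity via the limit $a_{\lambda N}/a_N\to c(\lambda)$ extracted from a convergence-of-types argument at a $t$ where $\cS(t)$ is non-degenerate, followed by the standard fact that $c(\lambda)=\lambda^\rho$ and that $a_N$ is therefore regularly varying of index $\rho$, with $\rho>0$ forced by \eqref{e:Snontriv}. Your remark that Assumption~\ref{ass:slowvar} is not actually used here is correct; the ergodicity in \lemref{lem:ergod} only needs $Y$ non-degenerate. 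One small simplification relative to what you write: since the scaling identity here involves multiplication only (no additive centering), ``not identically zero'' already suffices to pin down $c(\lambda)\in(0,\infty)$ -- the subtler non-degeneracy issue you flag, relevant to the full Khinchin convergence-of-types theorem with both scale and location parameters, does not arise.
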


\begin{proof}
  Stationarity of the increments follows immediately from
  \eqref{e:Stildetau} and the stationarity of the sequence $\tilde \tau $
  of successive waiting times  which was proved in Lemma~\ref{lem:ergod}.
  To see the self-similarity, fix $\lambda >0$ and $t$ such that
  condition \eqref{e:SN} holds for $t$ and
  $t/\lambda$, and $\mathcal S(t)$, $\mathcal S(\lambda t)$ are
  not identically zero, which is possible thanks to \eqref{e:Snontriv}.
  Then,
  \begin{equation*}
    \cS(t) = \lim_{N\to\infty} \frac{1}{a_N} S(Nt)
    = \lim_{N\to\infty} \frac{a_{\lambda N}}{a_N} \frac{1}{a_{\lambda
        N}} S\left(\lambda N \frac{t}{\lambda}\right) \stackrel{d}{=}
    \cS\left(\frac{t}{\lambda}\right) \lim_{N\to\infty} \frac{a_{\lambda
        N}}{a_N}.
  \end{equation*}
  Since $\mathcal S(t)$ and $\mathcal S(t/\lambda )$ are not identically
  zero, it follows that $\frac{a_{\lambda N}}{a_N}$ must
  converge to some constant $c(\lambda)$, yielding the scale invariance.
  Moreover, elementary results of the theory of regularly varying
  functions imply that $c(\lambda)=\lambda^{\rho}$ for some
  $\rho\in \RR$, and that $a_N$ is regularly varying of index $\rho$, that
  is $a_N=N^{\rho}\ell(N)$ for some slowly varying function $\ell(N)$.
  Note that $\rho>0$ since $\rho=0$ would imply
	$\lim_{N\to\infty} \frac{a_{\lambda N}}{a_N}=1$, hence
	$\cS(t)\stackrel{d}{=}\cS(t/\lambda)$, which violates the non-triviality
	assumption \eqref{e:Snontriv}.
\end{proof}

\begin{lemma}
  \label{lem:indincr}
  If the random walk $Y$ satisfies Assumption~\ref{ass:slowvar} and
	the rescaled clock processes $S_N$ converge to $\mathcal S$ in
	the way as stated in \thmref{thm:general},
	then the limit clock process $\cS$ has independent increments.
\end{lemma}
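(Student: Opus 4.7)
The plan is to extract independence of the limit increments from the Markov property of $Y$ together with the i.i.d.\ structure of the environment $(\pi_x)$. By the stationarity and self-similarity already established in \lemref{lem:propclock}, it is enough to show that for any $0<s<t$ both lying in the dense set of times where \eqref{e:SN} holds, the rescaled pair $(S(Ns)/a_N,(S(Nt)-S(Ns))/a_N)$ converges jointly to $(\cS(s),\cS(t)-\cS(s))$ with independent components; finite-dimensional independence then follows by iterating the argument over the intervals determined by a general partition $0=t_0<t_1<\cdots<t_m$.

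The central construction is a decomposition $S(Nt)-S(Ns)=\Delta^{\mathrm{fresh}}_N+\Delta^{\mathrm{shared}}_N$, where $\Delta^{\mathrm{fresh}}_N$ collects the waiting times at sites visited during $[Ns,Nt)$ but not during $[0,Ns)$, and $\Delta^{\mathrm{shared}}_N$ those at sites visited in both intervals. Writing $R_1:=Y([0,Ns])$ and $R_2:=Y([Ns,Nt])$, the variable $S(Ns)$ is measurable with respect to $Y([0,Ns])$ together with $(\pi_x,\tau_x^i)_{x\in R_1}$, whereas $\Delta^{\mathrm{fresh}}_N$ is measurable with respect to the full walk together with $(\pi_x,\tau_x^i)_{x\in R_2\setminus R_1}$. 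Because the trap families indexed by $R_1$ and by $R_2\setminus R_1$ are disjoint collections of i.i.d.\ variables, conditionally on $Y$ the pair $(S(Ns),\Delta^{\mathrm{fresh}}_N)$ has product law.

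What remains then splits into two tasks: (a) showing $\Delta^{\mathrm{shared}}_N/a_N\to 0$ in $\PP$-probability, and (b) upgrading this conditional independence to marginal independence of the limits. For (a) I would combine the laws of large numbers for the range derived in Appendix~\ref{appendix} from Assumption~\ref{ass:slowvar}, which force $|R_1\cap R_2|$ to be of strictly smaller order than the individual ranges, with the technical lemma announced for \secref{sec:proofs} that converts this smallness of the shared range into smallness of its clock contribution. Once (a) is in hand, the joint Laplace transform of the rescaled pair reduces to $\EE\!\left[\EE[e^{-\lambda S(Ns)/a_N}\mid Y]\,\EE[e^{-\mu\Delta^{\mathrm{fresh}}_N/a_N}\mid Y]\right]+o(1)$; by the Markov property the walk $Y([Ns,Nt])-Y(Ns)$ is independent of $Y([0,Ns])$, and the second conditional Laplace transform depends on $Y([0,Ns])$ only through $R_1$, whose effect on $\Delta^{\mathrm{fresh}}_N$ is again negligible by (a). The joint Laplace transform therefore factorizes in the limit.

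The main obstacle is step (a). Because the trap distribution $\pi_0$ may have infinite mean, one cannot control $\Delta^{\mathrm{shared}}_N$ by a first-moment bound, and a single anomalously deep trap at a shared vertex could in principle carry macroscopic clock time. The correct route is to dominate the shared contribution by a clock-process-like functional running over $o(N)$ effective steps and to invoke the scaling $a_N=N^\rho\ell(N)$ with $\rho>0$ from \lemref{lem:propclock}: slow variation of $\ell$ then forces such a sub-clock to rescale to $o(1)$. Implementing this rigorously is precisely the content of the technical lemma flagged for \secref{sec:proofs}, after which the factorization argument above delivers the desired independence of the increments of $\cS$.
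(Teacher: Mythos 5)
Your overall strategy is essentially the paper's: decompose the increment into a "fresh" part supported on trap families disjoint from those that feed $S(Ns)$, and show the "shared" part is negligible after rescaling, using the range laws of large numbers from Appendix~\ref{appendix} together with the technical lemma of Section~\ref{sec:proofs}. The paper packages this slightly differently — it introduces an auxiliary environment $(\pi_{x,j})_{j=1,\dots,n}$ refreshed at each cut time $\floor{Nt_j}$, so that the approximating clock $\tilde S$ has manifestly independent increments by construction and one only has to show $\tilde S\approx S'$ — but the underlying idea coincides with yours.

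There is, however, a genuine gap in your step (a). You assert that the shared contribution can be dominated by ``a clock-process-like functional running over $o(N)$ effective steps'', and this is exactly where the argument would fail without more care. The technical lemma you invoke (\lemref{lem:badtimes}) controls $\sum_{k\in\cB}\tilde\tau_k$ for a random set $\cB$ of \emph{times} with $|\cB|\le\epsilon N$. But \lemref{lem:llnmultincr} only gives $|M_N^i|=o(N/\ell^*(N))$, i.e.\ the shared set has few \emph{vertices}; the number of time-steps the walk spends at shared vertices is $\sum_{x\in M_N^i}L(x,\floor{Nt}-1)$, and this need not be $o(N)$ because a small set of shared vertices can carry local times of order $\gg\ell^*(N)$. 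The paper resolves this by the frequent/infrequent split at local-time level $K\ell^*(N)$: the infrequently visited shared vertices contribute at most $|M_N^i|\cdot K\ell^*(N)\le \epsilon K N$ steps (with $\epsilon K\to 0$ by the choice $K=-c\log\epsilon^2$), while the frequently visited ones are handled directly by \lemref{lem:largelocaltime}, which bounds $F_{N,K}$ in expectation using the geometric tail of $L(0,\cdot)$. Without this split your invocation of \lemref{lem:badtimes} is not licensed, and this is a real hole rather than a routine omission — it is the heart of why the proof in Section~\ref{sec:proofgen} needs three preparatory lemmas rather than one. A secondary imprecision: ``iterating'' the two-increment case over a partition does not by itself yield joint independence of all increments; one should instead carry out the $n$-interval decomposition directly, exactly as the paper does with its $\pi_{x,j(x)}$ construction, though this is easily repaired.
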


Let us postpone the proof of this lemma and show Theorem~\ref{thm:general}
first.

\begin{proof}[Proof of \thmref{thm:general}]
  By Lemmas~\ref{lem:propclock} and~\ref{lem:indincr}, $\cS$ has stationary
  and independent increments and is self-similar with index $\rho $.
  From this, the fact that $\cS\ge 0$
	and the non-triviality assumption \eqref{e:Snontriv}
	it follows that
  either $\rho=1$ and $\cS(t) = Mt$ for some $M\in (0,\infty)$, or
  $\rho>1$ and $\cS$ is an increasing $\alpha$-stable L\'evy process
  with $\alpha=\rho^{-1}\in(0,1)$,  that is an $\alpha $-stable
  subordinator. \lemref{lem:propclock} gives the normalizing sequence $a_N$
  as claimed.
\end{proof}

In order to show Lemma~\ref{lem:indincr} we need three technical lemmas
which are consequences of laws of large numbers for the range-like
objects related to the random walk $Y$, as mentioned in the introduction.

The first lemma states that for any given times $0=t_0<t_1<\cdots<t_n=t$,
the number of vertices visited by the random walk $Y$ in more than one of
the time intervals $[\floor{t_{i-1}N},\floor{t_i N}-1]$ is small. To this
end, let
\begin{equation*}
  R(k)=\{Y(0),\dots,Y(k-1)\}
\end{equation*}
be the range of the random
walk $Y$ at time $k-1$,  $R_N^i$ be the `range  between
$t_{i-1}N$ and $t_i N$',
\begin{equation*}
  R_N^i =
  \left\{Y(k):~k=\floor{Nt_{i-1}},\dots,\floor{Nt_i}-1\} \right\},
\end{equation*}
$O_N^i$ be the set of the points visited only in this time interval,
\begin{equation*}
  O_N^i = \left\{ x\in R_N^i:~x\notin R_N^j \text{ for all }j\neq i\right\},
\end{equation*}
and $M_N^i$ be the set of points visited in more than one of them,
$M_N^i = R_N^i\setminus O_N^i$.

\begin{lemma}
  \label{lem:llnmultincr}
  If $Y$ verifies Assumption~\ref{ass:slowvar},
  then for any choice of time points
  $0=t_0<t_1<\dots<t_n=t$,
  \begin{equation*}
    \lim_{N\to\infty} |M_N^i|\frac{\ell^*(N)}{N} = 0 \qquad
    \text{in } \mathbb P\text{-probability for all }i=1,\dots,n.
  \end{equation*}
\end{lemma}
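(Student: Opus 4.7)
The plan is to reduce the statement to a combinatorial double-counting identity combined with the law of large numbers for the range of $Y$, which holds under Assumption~\ref{ass:slowvar} and whose derivation is outlined in the Appendix. Write $R_N := R(\floor{Nt})$ for the total range and note that $R_N = \bigcup_{i=1}^n R_N^i$; for each $x\in R_N$ let $k_x$ denote the number of intervals $i\in\{1,\dots,n\}$ with $x\in R_N^i$, so that $x\in\bigcup_{j} M_N^{j}$ precisely when $k_x\geq 2$.

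First, I would invoke the range LLN, namely $|R(m)|\ell^{*}(m)/m\to 1$ in $\PP$-probability as $m\to\infty$. By the Markov property of $Y$ at time $\floor{Nt_{i-1}}$ and translation invariance, $|R_N^i|$ is equidistributed with $|R(m_i^N)|$, where $m_i^N := \floor{Nt_i}-\floor{Nt_{i-1}}\sim N(t_i-t_{i-1})$. Combined with the slow variation of $\ell^{*}$, which yields $\ell^{*}(m_i^N)/\ell^{*}(N)\to 1$, this gives
\begin{equation*}
  |R_N^i|\,\frac{\ell^{*}(N)}{N} \xrightarrow{\PP} t_i - t_{i-1}
  \qquad\text{and}\qquad
  |R_N|\,\frac{\ell^{*}(N)}{N} \xrightarrow{\PP} t.
\end{equation*}

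The combinatorial heart of the argument is the double-counting identity $\sum_{i=1}^n |R_N^i| = \sum_{x\in R_N} k_x$, which together with $|R_N|=\sum_{x\in R_N} 1$ gives
\begin{equation*}
  \sum_{i=1}^n |R_N^i| - |R_N| \;=\; \sum_{x\in R_N}(k_x-1) \;\geq\; |M_N^i|
\end{equation*}
for every $i$, since each $x\in M_N^i$ satisfies $k_x\geq 2$ and hence contributes at least $1$ to the middle sum. Multiplying this inequality by $\ell^{*}(N)/N$ and using the two limits above, the left-hand side converges in $\PP$-probability to $\sum_{i=1}^n(t_i-t_{i-1})-t=0$, which forces $|M_N^i|\ell^{*}(N)/N\to 0$ in $\PP$-probability, as desired.

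The only non-routine point is the range LLN $|R(m)|\ell^{*}(m)/m\to 1$ itself under the weak hypothesis of slow variation; this is the real analytic content of the argument and is imported as a black box from the Appendix, where it follows from $\EE[|R(n)|]=\sum_{k=0}^{n-1} r_k$ together with a variance estimate. Transferring the LLN from deterministic time $m$ to the restricted range $|R_N^i|$ is then routine via the Markov property and the uniform convergence theorem for slowly varying functions.
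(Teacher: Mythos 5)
Your proof is correct. The double-counting bound
\begin{equation*}
  |M_N^i| \;\leq\; \sum_{x\in R_N}(k_x-1) \;=\; \sum_{j=1}^n |R_N^j| - |R_N|
\end{equation*}
is valid, and combining the range LLN for $|R_N|$ (from Lemma~\ref{lem:llnrangegeneral}, part (i)) with the LLN for each $|R_N^i|$ (obtained by the Markov property at $\floor{Nt_{i-1}}$, translation invariance, and uniform convergence of slowly varying functions) squeezes the right-hand side, after multiplying by $\ell^*(N)/N$, to $\sum_i(t_i-t_{i-1})-t=0$. Since $|M_N^i|\geq 0$, this forces $|M_N^i|\ell^*(N)/N\to 0$ in probability.

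Your route differs from the paper's in the decomposition used. The paper sandwiches the ``only visited once'' set $O_N^i$ between $|R(\floor{Nt})|-|\bigcup_{j\neq i}R_N^j|$ and $|R(\floor{Nt_i})|-|R(\floor{Nt_{i-1}})|$, derives an LLN for the union $|\bigcup_{j\neq i}R_N^j|$ over the non-contiguous time set by applying the Markov property at each $\floor{Nt_i}$, and then reads off $|M_N^i|=|R_N^i|-|O_N^i|$. Your version sidesteps the LLN for the non-contiguous union entirely: you only ever need range LLNs over single contiguous intervals plus the full range. Algebraically the two bounds are closely related --- one recovers your inequality by replacing $|\bigcup_{j\neq i}R_N^j|$ with the coarser $\sum_{j\neq i}|R_N^j|$ in the paper's lower bound for $|O_N^i|$ --- but your packaging is leaner, shifting the cancellation from a two-sided squeeze on $|O_N^i|$ to a single one-sided bound on $|M_N^i|$ that vanishes in the limit.

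One caveat: the argument is efficient precisely because $t_0=0$ and $t_n=t$, so that $\bigcup_j R_N^j = R(\floor{Nt}) = R_N$ exactly; were the intervals not a partition of $[0,t]$, the identity $\sum_j|R_N^j| - |R_N| = \sum_x(k_x-1)$ would fail. This is satisfied in the statement as given, so no gap.
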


\begin{proof}%[Proof of \lemref{lem:llnmultincr}]
  The size of the sets  $O^i_N$ can be bounded by
  \begin{equation}
    |R(\floor{Nt})|- \left| \bigcup_{j\neq i} R_N^j \right|
    \leq |O_N^i|
    \leq \left| R(\floor{Nt_i}) \right| -
      \left|R(\floor{Nt_{i-1}})\right|. \label{eq:llnlem2}
  \end{equation}
  Applying the laws of large numbers from Lemma~\ref{lem:llnrangegeneral} and the Markov property at times
  $\floor{Nt_i}$, it follows that for every $i=1,\dots,n$,
  \begin{equation*}
      |R(\floor{Nt_i})|\frac{\ell^*(N)}{N t_i} \xrightarrow{N\to\infty}1,
      \quad\text{and}\quad
      \left| \bigcup_{j\neq i} R_N^j \right|\frac{\ell^*(N)}{N
        (t_n-t_i+t_{i-1})}\xrightarrow{N\to\infty}1
  \end{equation*}
  in probability. Inserting this into \eqref{eq:llnlem2} yields a law of
  large numbers for $|O_N^i|$,
  \begin{equation*}
    |O_N^i|\frac{\ell^*(N)}{N (t_i-t_{i-1})}\xrightarrow{N\to\infty}1
  \end{equation*}
  in probability. By Lemma~\ref{lem:llnrangegeneral} and the Markov property
  again, $|R_N^i|$ satisfies the same law of large numbers as $|O_N^i|$.
  Using $|M_N^i|=|R_N^i|-|O_N^i|$ the claim follows.
\end{proof}
The second lemma will help to control the contribution of frequently visited vertices to
the clock process. Fix $t>0$,
and for $K>0$ define the  set of `frequently visited vertices'
\begin{equation}
  \label{e:calFNK}
  \mathcal F_{N,K}=\big\{x:L(x,\floor{Nt}-1) \geq K \ell^*(N)\big\}.
\end{equation}
Let $F_{N,K}$ be the `number of visits to $\mathcal F_{N,K}$'
\begin{equation}
  \label{e:FNK}
  F_{N,K} = \sum_{x\in\mathcal F_{N,K}} L(x,\floor{Nt}-1).
\end{equation}

\begin{lemma}
  \label{lem:largelocaltime}
  If $Y$ verifies Assumption~\ref{ass:slowvar}, then there
  is a constant $c>0$ such that for every $\varepsilon >0$
	and fixed $t>0$
  \begin{equation*}
    \mathbb P[F_{N,K}\geq \epsilon  Nt] \leq
    \epsilon \qquad
    \text{for all $N$ large enough,}
  \end{equation*}
  with
  \begin{equation}
    K=K(\epsilon) = -c\log\left(\epsilon^2\right). \label{def:K}
  \end{equation}
\end{lemma}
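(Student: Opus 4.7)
The plan is to establish $\EE[F_{N,K}] \le C\,Nt\,e^{-cK}$ and then apply Markov's inequality. Set $k_0 := \lceil K\ell^*(N)\rceil$. The first step I would carry out is the pathwise reduction
\[ F_{N,K} \;\le\; 2\sum_{j=0}^{\lfloor Nt\rfloor - 1} \mathbf{1}_{B_j \ge k_0/2},\qquad B_j := L(Y(j),\lfloor Nt\rfloor - 1) - L(Y(j), j-1), \]
so that it is enough to control the count $B_j$ of visits to $Y(j)$ at times $\ge j$. This follows by reindexing the sum over $j$ by the vertex $Y(j)$: the $k$-th visit to $x$ contributes to $B_j$ precisely when $L(x,\lfloor Nt\rfloor-1) - k + 1 \ge k_0/2$, so
\[ \sum_j \mathbf{1}_{B_j \ge k_0/2} = \sum_{x:\, L(x,\lfloor Nt\rfloor-1)\ge k_0/2} \bigl(L(x,\lfloor Nt\rfloor-1) - k_0/2 + 1\bigr), \]
and the elementary inequality $L(x) \le 2(L(x) - k_0/2 + 1)$, valid whenever $L(x) \ge k_0$, yields the factor-$2$ bound on $F_{N,K} = \sum_{x:\, L(x)\ge k_0} L(x,\lfloor Nt\rfloor-1)$.

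With this reduction in hand, the Markov property at time $j$ identifies the conditional law of $B_j$ with that of $L(0,\lfloor Nt\rfloor - 1 - j)$ for a fresh walk starting at $0$. Taking expectation and reindexing the resulting sum,
\[ \EE\Bigl[\textstyle\sum_j \mathbf{1}_{B_j \ge k_0/2}\Bigr] = \sum_{n=0}^{\lfloor Nt\rfloor-1} \PP[L(0,n) \ge k_0/2] \;\le\; Nt\cdot \PP\bigl[L(0,\lfloor Nt\rfloor) \ge k_0/2\bigr] \]
by monotonicity in $n$. To bound $\PP[L(0,n)\ge m]$ I would decompose in terms of the i.i.d.\ return times $\tau_1,\tau_2,\dots$ to $0$, which satisfy $\PP[\tau\le n] = 1 - r_n = 1 - 1/\ell^*(n)$ (allowing $\tau_i=\infty$ in the transient case): since $L(0,n)\ge m$ requires $\tau_1+\cdots+\tau_{m-1}\le n$, forcing each $\tau_i\le n$, one obtains
\[ \PP[L(0,n)\ge m] \;\le\; \bigl(1 - 1/\ell^*(n)\bigr)^{m-1} \;\le\; e^{-(m-1)/\ell^*(n)}. \]

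Plugging in $n = \lfloor Nt\rfloor$ and $m-1 \asymp K\ell^*(N)/2$, and using slow variation of $\ell^*$ to get $\ell^*(\lfloor Nt\rfloor)\le C_t\,\ell^*(N)$ for all $N$ large enough, I conclude $\EE[F_{N,K}] \le 4Nt\cdot e^{-K/(3C_t)}$ (with lower-order factors absorbed for large $N$). Markov's inequality then gives $\PP[F_{N,K}\ge \epsilon Nt]\le 4e^{-K/(3C_t)}/\epsilon$, which is at most $\epsilon$ as soon as $K\ge -c\log(\epsilon^2)$ for $c := 3C_t$ (and $N$ large enough for the slow-variation comparison to apply). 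The most delicate point I expect is the pathwise reduction in the first paragraph---getting the combinatorial identity and the factor of $2$ right---since once $B_j$ is isolated the Markov-property-plus-tail-estimate computation is routine. I would also want to check that the return-time bound applies uniformly across the recurrent and transient regimes, both of which are covered by Assumption~\ref{ass:slowvar}.
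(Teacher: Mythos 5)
Your proof is correct, and it takes a genuinely different route from the paper. The paper decomposes $F_{N,K}$ by first-visit times: writing $\psi_k$ for the indicator that a new vertex is discovered at time $k$, it bounds $\EE[F_{N,K}]\le \EE\bigl[L(0,\floor{Nt}-1)\Ind{L(0,\floor{Nt}-1)\ge K\ell^*(N)}\bigr]\cdot\sum_k\EE[\psi_k]$, so it needs both the expected range $\EE[|R(\floor{Nt})|]\sim Nt/\ell^*(N)$ and the first moment of a truncated geometric, which gives a bound of order $Nt\cdot K e^{-K}$. Your ``doubling'' reduction $F_{N,K}\le 2\sum_j\mathbf{1}_{B_j\ge k_0/2}$, where $B_j$ is the forward local time at $Y(j)$, is a clean combinatorial observation that sidesteps both: it converts the weighted sum over heavily-visited sites into an unweighted count over \emph{times}, so only the single tail probability $\PP[L(0,\floor{Nt})\ge k_0/2]$ is needed, and the $Nt$ factor comes for free from the number of times. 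Your reindexing identity and the elementary inequality $L\le 2(L-k_0/2+1)$ for $L\ge k_0$ are right, as is the Markov-property step identifying the conditional law of $B_j$ with $L(0,\floor{Nt}-1-j)$; the geometric tail bound $(1-1/\ell^*(n))^{m-1}$ is the same one the paper uses. You pay a small price in the exponent (you get $e^{-cK/2}$ rather than $Ke^{-K}$), which is immaterial since both are far more than enough for the Markov-inequality step with $K=-c\log(\epsilon^2)$, and the argument covers the recurrent and transient regimes uniformly just as the paper's does. One stylistic remark: as in the paper's own proof, the conclusion really holds for $\epsilon$ small enough (for $\epsilon$ near $1$ the chosen $K$ may be too small), which is all the lemma is used for.
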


\begin{proof}
  We claim that for $\epsilon$ small enough and $N$ large enough,
  \begin{equation}
    \label{eq:expecF}
    \mathbb E\left[F_{N,K}\right]
    \leq \epsilon^2 Nt.
  \end{equation}
  Applying the Markov inequality then yields the desired result.

  To show \eqref{eq:expecF}, let
  $\psi_k=\Ind{Y(l)\neq Y(k)\,\forall l<k}$ be the indicator of the
  event that a `new' vertex is found at time $k$.  Then
  \begin{equation*}
    F_{N,K} = \sum_{k=0}^{\floor{Nt}-1}\psi_k
    L(Y_k,\floor{Nt}-1)\Ind{L(Y_k,\floor{Nt}-1)\ge K\ell^*(N)}.
  \end{equation*}
  Using the Markov property and the fact that $L(Y_k,\floor{Nt}-1)$ is
  stochastically dominated by $L(0,\floor{Nt}-1)$,
  \begin{equation}
    \label{e:poi}
    \mathbb E [F_{N,K}] \le
    \mathbb E\big[L(0,\floor{Nt}-1)
      \Ind{L(0,\floor{Nt}-1)\ge K\ell^*(N)}\big]
    \sum_{k=0}^{\floor{Nt}-1}\mathbb E[\psi_k].
  \end{equation}
  By \eqref{e:ER},
  $\sum_{k=0}^{\floor{Nt}-1}\mathbb E[\psi_k]=\mathbb E [|R(\floor{Nt})|]
	=Nt/\ell^*(\floor{Nt})(1+o(1))$.
  On the other hand, denoting by $\tilde H_0$ the first return time of
  $Y$ to
  $0$, for every $k\ge 1$
  \begin{equation*}
    \mathbb P[L(0,\floor{Nt}-1)\ge k]
    \le \big(\mathbb P[\tilde H_0\le \floor{Nt}]\big)^{k-1}
    = (1-\ell^*(\floor{Nt})^{-1})^{k-1},
  \end{equation*}
  and thus $L(0,\floor{Nt}-1)$ is stochastically dominated by a geometric
  random variable with parameter $1/\ell^*(\floor{Nt})$. If $G$ is a
  geometric variable with parameter $p$, then for every $M\in \mathbb N$,
  \begin{equation*}
    \mathbb E[G\Ind{G\ge M}]= (1-p)^{M-1}\Big(M -1 +\frac 1p\Big).
  \end{equation*}
  Hence,
  \begin{equation}
    \begin{split}
      \label{e:poii}
      \mathbb E&\big[L(0,\floor{Nt}-1)
        \Ind{L(0,\floor{Nt}-1)\ge K\ell^*(N)}\big]
      \\&\le
      \left(1-\frac 1 {\ell^* (\floor{Nt})}\right)^{K\ell^*(N)-1}
      \big(K\ell^*(N) -1 + \ell^*(\floor{Nt})\big)
			%\\ &\leq e^{-K\frac{\ell^*(N)}{\ell^*(\floor{Nt})}}
			%\ell^*(\floor{Nt}) \bigg(\frac{K\ell^*(N)-1}{\ell^*(\floor{Nt})} +1 \bigg),
    \end{split}
  \end{equation}
  and the claim \eqref{eq:expecF} follows by inserting $K$ as in
  \eqref{def:K}, using the slow variation of $\ell^{*}$ and combining
	\eqref{e:poi}, \eqref{e:poii}.
\end{proof}

The last of the technical lemmas allows to control the influence of an
arbitrary subset of waiting times to the sum of all waiting times if the
subset is small.

\begin{lemma}
  \label{lem:badtimes}
  Let $\cB_N\subset\{0,1,\dots,\floor{Nt}-1\}$ be a
  random set, depending on the trajectory of the random walk $Y$ up to
  time $\floor{Nt}-1$ only. If
  Assumption~\ref{ass:slowvar} holds, then for every $t>0$ and $\delta>0$,
  \begin{equation*}
    \lim_{\epsilon\to0} \lim_{N\to\infty}
    \PP\left[\sum_{k\in \cB} \tilde{\tau}_k \geq \delta
      S(\floor{Nt}),~|\cB|\leq \epsilon N \right] =0.
  \end{equation*}
\end{lemma}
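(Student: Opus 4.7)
The plan is two-fold: first, replace the random denominator $S(\floor{Nt})$ by a deterministic lower bound proportional to $a_N$; second, control the numerator $\sum_{k\in\cB_N}\tilde\tau_k$ by a truncation argument, refined with the machinery of \lemref{lem:largelocaltime}.

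For the first step, I would argue that $\cS(t)>0$ almost surely for every $t>0$. The process $\cS$ is non-decreasing as the distributional limit of the non-decreasing processes $S_N$, and by \lemref{lem:propclock} it is self-similar with positive index. Hence if $\PP[\cS(t)=0]=p$ for some $t>0$, monotonicity gives $\{\cS(t)=0\}\subset\{\cS(s)=0\}$ for $s\le t$, and self-similarity equalises the marginal probabilities for every $t>0$, forcing $\PP[\cS\equiv 0]=p$, which under \eqref{e:Snontriv} means $p=0$. Combined with the convergence $S_N(t)\to\cS(t)$, this lets me pick $c=c(\eta)>0$ with $\PP[S(\floor{Nt})<c\,a_N]\le\eta$ for all large $N$, reducing the task to bounding
\begin{equation*}
  \PP\Bigl[\textstyle\sum_{k\in\cB_N}\tilde\tau_k\ge\delta c\,a_N,\ |\cB_N|\le\epsilon N\Bigr].
\end{equation*}

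For the second step, fix a truncation level $T$ and decompose $\tilde\tau_k=(\tilde\tau_k\wedge T)+(\tilde\tau_k-T)^+$. Because $\cB_N$ is $\sigma(Y)$-measurable and each $\tilde\tau_k$ has, conditionally on $Y$, the marginal law $\sigma:=\int\pi\,d\mu(\pi)$, a conditional Markov inequality gives
\begin{equation*}
  \PP\Bigl[\textstyle\sum_{k\in\cB_N}\tilde\tau_k\wedge T\ge \tfrac{\delta c a_N}{2},\ |\cB_N|\le\epsilon N\Bigr]\le \frac{2\epsilon N\,\EE_\sigma[\tau\wedge T]}{\delta c\,a_N},
\end{equation*}
while a union bound on the overshoot part yields
\begin{equation*}
  \PP\bigl[\exists k\in\cB_N:\tilde\tau_k>T,\ |\cB_N|\le\epsilon N\bigr]\le \epsilon N\,\PP_\sigma(\tau>T).
\end{equation*}

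The main obstacle is that in the heavy-tailed regime (when the self-similarity index $\rho$ from \lemref{lem:propclock} exceeds $1$ and the marginal $\sigma$ has infinite mean) no single choice of $T$ makes both of the above bounds $o(1)$: the truncated mean grows, and shrinking $T$ to control it lets individual big jumps of order $a_N$ leak through, which also dominate $S(\floor{Nt})$ itself. To bypass this I would call on \lemref{lem:largelocaltime}: choose $K=K(\epsilon)$ so that the visit count $F_{N,K}$ to the set $\cF_{N,K}$ of frequently-visited vertices is at most $\epsilon Nt$ with probability $\ge 1-\epsilon$, and split $\cB_N$ into the part with $Y(k)\in\cF_{N,K}$ and its complement. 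On the `rare' part each vertex has local time $<K\ell^*(N)$, so the per-vertex exchangeability of the $(\tau_x^i)_i$ gives enough concentration to sharpen the Markov bound into something that closes. The `frequent' part is absorbed into $\sum_{k:Y(k)\in\cF_{N,K}}\tilde\tau_k$, which by the same vertex-wise exchangeability is, in conditional expectation, a fraction $F_{N,K}/(Nt)\le\epsilon/t$ of $S(\floor{Nt})$, and hence small compared to it.
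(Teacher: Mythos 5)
Your approach diverges from the paper's, and the two places where you rely on intuition rather than a worked-out bound are exactly where the difficulty lies, so the proposal does not close the lemma.

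The paper's proof never normalises by $a_N$ at all; it works throughout with the \emph{ratio} $\tilde\tau_l/S(\floor{Nt})$, which is bounded in $[0,1]$. Conditionally on $Y$, the $\tau_x^i$ are exchangeable in $i$ (within a vertex) and the first visits $(\tau_y^1)_y$ are i.i.d.\ across vertices, so one can compute or bound $\EE\bigl[\tilde\tau_l/S(\floor{Nt})\mid Y\bigr]$ exactly, with no moment hypothesis whatsoever on $\mu$. In the transient case the bound is $1/|R(\floor{Nt})|\sim\ell^*(N)/N$ and the Markov inequality on the ratio finishes. In the recurrent case one cannot use this directly: one splits into $R_{>\beta}$ and $R_{\le\beta}$ and, for vertices in $R_{>\beta}$, compares $\sum_{i\le L_\cB(x)}\tau_x^i$ not to all of $S$ but to $\sum_{y\in R_{>\beta}}\sum_{i\le\beta\ell^*(N)}\tau_y^i$, so that both sides are sums of the same exchangeable blocks of length $\beta\ell^*(N)$; this yields the ratio bound $L_\cB(x)/(\beta\ell^*(N)|R_{>\beta}|)\sim L_\cB(x)/(c_\beta N)$. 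For vertices in $R_{\le\beta}$ a counting comparison of $|R_\cB\cap R_{\le\beta}|$ against $|R_{>\beta}|$ gives $O(e^\beta-1)$, and one sends $\beta\to0$ last. The lemma is therefore proved under Assumption~A alone, with no appeal to the convergence $S_N\Rightarrow\cS$.

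There are two concrete gaps in your version. First, the claimed bound on the ``frequent part'' is incorrect. The vertex-wise exchangeability argument you invoke (bounding $S$ from below by first visits only) gives
\begin{equation*}
  \EE\Bigl[\tfrac{1}{S(\floor{Nt})}\sum_{k:\,Y(k)\in\cF_{N,K}}\tilde\tau_k\ \Big|\ Y\Bigr]\ \le\ \frac{F_{N,K}}{|R(\floor{Nt})|},
\end{equation*}
\emph{not} $F_{N,K}/(Nt)$ — the conditional expectation of a ratio is not the ratio of conditional expectations, and the $\floor{Nt}$ many $\tilde\tau_k$'s are only partially exchangeable (exchangeable within a vertex, not across the whole sequence). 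Since $|R(\floor{Nt})|\sim Nt/\ell^*(N)$, the true bound is $\sim\epsilon\,\ell^*(N)$, which diverges in the recurrent case. To get a bound that stays finite one has to exploit that a frequently visited vertex contributes $L(x)\ge\beta\ell^*(N)$ exchangeable pieces and replace the crude lower bound $S\ge\sum_y\tau_y^1$ by $S\ge\sum_{y\in R_{>\beta}}\sum_{i\le\beta\ell^*(N)}\tau_y^i$; that is exactly the paper's computation, and you have not done it. Second, for the ``rare'' part your claim that bounded local time plus exchangeability ``gives enough concentration to sharpen the Markov bound'' is unfounded: with no moment assumption on $\sigma=\int\pi\,d\mu$, the sum $\sum_{k\in\cB'}\tilde\tau_k$ does not concentrate, and the $a_N$-normalised Markov bound you set up in the truncation step still fails, which you yourself observed. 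The recurrent case genuinely needs the comparison-of-ranges argument, not a truncation. (Your Step~1, reducing $S(\floor{Nt})$ to $c\,a_N$ via $\cS(t)>0$, is sound in the context where the lemma is applied, though it injects a hypothesis the paper deliberately avoids.)
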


The proof of this lemma is surprisingly lengthy and is therefore
postponed to Section~\ref{sec:proofs}. The main source of complications
comes from the fact that we cannot make any assumptions on the moments of the
waiting times $\tau_x^i$. It is also essential to use some properties of
the random walk $Y$, as it is easy to construct counterexamples to the
lemma when $\tau_x^i$ are not summed along the trajectory of $Y$.

With the above three lemmas we can now show Lemma~\ref{lem:indincr}.

\begin{proof}[Proof of Lemma~\ref{lem:indincr}]
  Fix times $0=t_0<t_1<\dots<t_n=t $. Consider first
  the following alternative construction of the clock
  process $S$. On the same space $(\Omega,\cF,\PP)$, let for every
  $x\in\ZZ^d$ independently $(\pi_{x,j})_{j=1,\dots,n}$ be
  i.i.d.~$\mu$-distributed probability measures,
  and given a realisation of these measures, let $(\tau_{x,j}^i)_{x\in\ZZ^d, j,i\geq1}$
  be independent random variables such that every $\tau_{x,j}^i$ has
  distribution $\pi_{x,j}$. For every vertex $x\in\ZZ^d$, let
  $j(x)$ be be such that the first visit to $x$ occurs in the time
  interval $[\floor{Nt_{j(x)-1}},\floor{Nt_{j(x)}}-1]$.
  Define a new process $S':\mathbb N\to [0,\infty)$ by
  \begin{equation*}
    S'(k) = \sum_{x\in\ZZ^d} \sum_{i=1}^{L(x,k-1)} \tau_{x,j(x)}^i.
  \end{equation*}
  One can think of choosing the distributions $\pi_x$ at the time of the first
  visit in $x$ according to the time interval in which this first
  visit occurs. Constructed in this way, $S'$ has clearly the
  same distribution as the original clock process $S$.

  We now define an approximation $\tilde{S}$ of
  $S'$ which
  collects time $\tau_{x,j}^{L(x,k)}$ whenever at a vertex $x$
  at time $k\in[\floor{Nt_{j-1}},\floor{Nt_j}-1]$,
  \begin{equation} \label{def:stilde}
    \tilde{S}(m)
    = \sum_{j=1}^n \sum_{k=\floor{Nt_{j-1}}}^{(m\wedge\floor{Nt_j})-1} \tau_{Y(k),j}^{L(Y(k),k)}.
  \end{equation}
  $\tilde S$ can be viewed as the clock for which the whole environment
  $\pi$ is being refreshed at all times $\floor{Nt_j}$. Therefore, by the
  independence structure of the $\tau_{x,j}^i$'s, the increments
  $(\tilde{S}(\floor{Nt_j})-\tilde{S}(\floor{Nt_{j-1}}))_{j=1,\dots,n}$
  are mutually independent. In addition, for every $j$, the
  increment $\tilde{S}(\floor{Nt_j})-\tilde S(\floor{Nt_{j-1}})$ is
  independent of the increments
  $\{\xi_k:~k\notin[\floor{Nt_{j-1}},\floor{Nt_j}-1]\}$ of the random
  walk $Y$.

  To conclude the proof it is now sufficient to show that for all $j=1,\dots,n$ and every $\delta>0$,
  \begin{equation} \label{eq:stilde}
    \lim_{N\to\infty} \PP\left[ \left|\tilde{S}(\floor{Nt_j})-S'(\floor{Nt_j})\right| > \delta S'(\floor{Nt_j}) \right] =0.
  \end{equation}
  This implies that the limit process $\cS$ has independent increments.
  Indeed, note that \eqref{eq:stilde} readily implies
  $\frac{\tilde{S}(\floor{Nt_j})}{S'(\floor{Nt_j})}\to1$ in $\PP$-probability
  for all $j$. This means that whenever
  $\frac{1}{a_N}S'(\floor{Nt_j}) \stackrel{d}{\to} \cS(t_j)$, then also
  $\frac{1}{a_N}\tilde{S}(\floor{Nt_j}) \stackrel{d}{\to} \cS'(t_j)$, and
  therefore the increments $(\cS(t_j)-\cS(t_{j-1}))_{j=1,\dots,n}$ are
  independent, whenever \eqref{e:SN} is satisfied for the times $t_j$. By
  easy approximation arguments this also holds for the at most countably
  many $t_j$'s that do not satisfy \eqref{e:SN}. Since the times $t_j$
  are chosen arbitrarily,
  it follows that the process $\cS$ has independent increments.

  In order to show \eqref{eq:stilde}, note that the difference of
	$\tilde{S}(\floor{Nt_j})$ and $S'(\floor{Nt_j})$ originates in the waiting times in
  vertices visited in multiple time intervals.
  Recalling the sets $M_N^j$ from \lemref{lem:indincr},
  \begin{equation*}
    \left| \tilde{S}(\floor{Nt_j}) - S'(\floor{Nt_j})\right| \leq \sum_{l=1}^j \sum_{x\in M_N^l} \sum_{i=1}^{L(x,\floor{Nt_j}-1)} \tau_{x,l}^i.
  \end{equation*}
  It is therefore sufficient to show that for each $j=1,\dots,n$ and $1\leq l\leq j$, and every $\delta>0$,
  \begin{equation*}
    \lim_{N\to\infty} \PP\left[\sum_{x\in M_N^l}
      \sum_{i=1}^{L(x,\floor{Nt_j}-1)} \tau_{x,l}^i \geq \delta S'(\floor{Nt_j}) \right] =0.
  \end{equation*}
  The probability above is bounded by
  \begin{equation*}
    \PP\left[ (1+\delta) \left(\sum_{x\in M_N^l}\!\!\!\!\!\!\!\sum_{i=1}^{L(x,\floor{Nt_j}-1)} \tau_{x,l}^i \right) \geq
      \delta \left(\sum_{x\in R(\floor{Nt_j})\setminus M_N^l}\!\!\!\!\!\!\!\sum_{i=1}^{L(x,\floor{Nt_j}-1)} \tau_{x,j(x)}^i +
        \sum_{x\in M_N^l}\!\!\!\!\!\!\!\sum_{i=1}^{L(x,\floor{Nt_j}-1)} \tau_{x,l}^i \right) \right].
  \end{equation*}
  Note that, by definition of the random variables $\tau_{x,j}^i$,
  requiring the above probability to tend to $0$ as $N\to\infty$ is the same as requiring
  \begin{equation} \label{eq:ignoreMj}
    \lim_{N\to\infty} \PP\left[\sum_{x\in M_N^l}
      \sum_{i=1}^{L(x,\floor{Nt_j}-1)} \tau_{x}^i \geq \delta S(\floor{Nt_j}) \right] =0,
  \end{equation}
  for each $j=1,\dots,n$ and $1\leq l\leq j$, and every $\delta>0$, where here
  $S$ is the original clock process, i.e.~the sum of the $\tau_x^i$'s which have distributions $\pi_x$.

  Fix $\epsilon>0$ small, set $K$ as in \eqref{def:K}, recall the
  definition of $\mathcal F_{N,K}$ from \eqref{e:calFNK} (with $t_j$ instead of $t$),
  and write
  \begin{equation}
    \begin{split}
      \label{eq:indincr}
      &\PP \left[\sum_{x\in M_N^l} \sum_{i=1}^{L(x,\floor{Nt_j}-1)}
        \tau_x^i \geq \delta S(\floor{Nt_j}) \right]  \\
      &\leq \PP\left[\sum_{x\in M_N^l\setminus \mathcal F_{N,K}}
        \!\!\!\!\!\!\!\sum_{i=1}^{L(x,\floor{Nt_j}-1)} \tau_x^i \geq
        \frac{\delta}{2} S(\floor{Nt_j}) \right]
      + \PP\left[\sum_{x\in M_N^l\cap \mathcal F_{N,K}}
        \!\!\!\!\!\!\!\sum_{i=1}^{L(x,\floor{Nt_j}-1)} \tau_x^i \geq
        \frac{\delta}{2} S(\floor{Nt_j}) \right].
    \end{split}
  \end{equation}
  By \lemref{lem:llnmultincr} we can choose $N$ large enough such that
  $\PP[| M_N^l| > \epsilon N / \ell^*(N)] \leq \epsilon$. Then the first
  term on the right-hand side of \eqref{eq:indincr} is bounded by
  \begin{align*}
    %  \PP &\left[\sum_{\substack{x\in M_N \\ L(x,\floor{Nt}-1)
    %        \leq K \ell^*(N)}} \sum_{i=1}^{L(x,\floor{Nt}-1)} \tau_x^i \geq
    %    \frac{\delta}{2} S(\floor{Nt}) \right] \\
    & \PP\left[\sum_{x\in M_N^l\setminus \mathcal F_{N,K} }
      \sum_{i=1}^{L(x,\floor{Nt_j}-1)} \tau_x^i
      \geq \frac{\delta}{2}S(\floor{Nt_j}) ,~|M_N^l| \leq \epsilon N /
      \ell^*(N)\right] + \epsilon \\
    &= \PP\left[ \sum_{k\in\cB_1} \tilde{\tau}_k \geq \frac{\delta}{2}
      S(\floor{Nt_j}) ,~|M_N^l| \leq \epsilon N / \ell^*(N)\right]
    + \epsilon.
  \end{align*}
  Here $\cB_1$ is the set of all times where a vertex in $M_N^l\setminus\mathcal F_{N,K}$, i.e.~with
  $L(x,\floor{Nt_j}-1) \leq K\ell^*(N)$ is visited. But if
  $|M_N^l| \leq \epsilon N / \ell^*(N)$, then $|\cB_1|\leq \epsilon K N$.
  Since $\epsilon K \to 0$ as $\epsilon\to0$ by the definition of $K$, we
  can apply \lemref{lem:badtimes} to get that the first term on the
  right-hand side of \eqref{eq:indincr} converges to $0$ when
  $N\to \infty $ and then $\varepsilon \to 0$.

  The second term on the right-hand side of \eqref{eq:indincr}
  can be bounded similarly. Recalling $F_{N,K}$ (for $t_j$) from \eqref{e:FNK}, it is
  bounded from above by
  \begin{align*}
    %\PP &\left[\sum_{\substack{x\in M_N \\
    %     L(x,\floor{Nt}-1)\geq K\ell^*(N)}} \sum_{i=1}^{L(x,\floor{Nt}-1)}
    % \tau_x^i \geq \frac{\delta}{2} S(\floor{Nt}) \right] \\
    &\PP\left[ F_{N,K} \geq \epsilon N\right] + \PP\left[
      \sum_{k\in\cB_F} \tilde{\tau}_k \geq \frac{\delta}{2} S(\floor{Nt_j}),
      F_{N,K} \leq \epsilon N \right].
  \end{align*}
  Here $\cB_F$ is the set of times where a frequently visited vertex is
  visited, i.e.~$|\cB_F|=F_{N,K}$. Applying \lemref{lem:largelocaltime}
  to the first term and \lemref{lem:badtimes} to the second, this
  converges to zero as $N\to\infty$ and $\varepsilon \to 0$,
  and \eqref{eq:ignoreMj} follows. This finishes the proof of the lemma.
\end{proof}

%---------------------------------------------------------------------------------------------------------------------------------------------------------
%---------------------------------------------------------------------------------------------------------------------------------------------------------
%---------------------------------------------------------------------------------------------------------------------------------------------------------
%---------------------------------------------------------------------------------------------------------------------------------------------------------

\section{Proof of \thmref{thm:class}} %<<<1
\label{sec:proofz}

The goal of this section is to prove the classification theorem for the
RTRW, Theorem~\ref{thm:class}. This will be done using
Theorem~\ref{thm:general}. At first we should however show that the
assumptions of Theorem~\ref{thm:class} allow to verify the hypotheses of
Theorem~\ref{thm:general}.
%Remark that this result does not use the fact that $d\ge 2$ and holds for
%$d=1$ too.

\begin{proposition}[$d\ge 1$]
  \label{prop:convclock}
  Let $X_N$ be as in \eqref{e:XN}. Suppose that Assumption~\ref{ass:BM}
  holds and that $X_N$ converge in the sense of Theorem~\ref{thm:class}.
  Then the clock processes $S_N$, defined as in \eqref{e:SN}, converge in
  $\PP$-distribution on $D^d(M_1')$ to some process $\cS$. If $\cS(0)=0$,
  then the convergence holds with respect to the Skorohod $M_1$-topology.
\end{proposition}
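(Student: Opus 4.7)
The strategy is to first establish tightness of $\{S_N\}$ in $D^1(M_1')$ by exploiting the monotonicity of each $S_N$, and then to identify every subsequential limit uniquely via the decomposition $X_N=Y_N\circ S_N^{-1}$ together with the pathwise recovery of the time change from the quadratic variation of the limit $\cX$.

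\emph{Tightness.} Since each $S_N$ is non-decreasing and cadlag with $S_N(0)=0$, standard Helly-type criteria for $D^1(M_1')$ (see \cite[Ch.~13]{W02}) reduce tightness to the pointwise tightness of $S_N(t)$ in $[0,\infty)$ for each fixed $t>0$. The key observation is the inclusion
\begin{equation*}
	\{S_N(t)>K\}\subseteq\Bigl\{\sup_{s\le K}|X_N(s)|\le\sup_{u\le t}|Y_N(u)|\Bigr\},
\end{equation*}
which follows from $X_N(s)=Y_N(S_N^{-1}(s))$ and the monotone implication $S_N(t)>K\Rightarrow S_N^{-1}(s)\le t$ for every $s\le K$. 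Assumption~\ref{ass:BM} and Donsker's invariance principle give $Y_N\Rightarrow B$, so $\sup_{u\le t}|Y_N(u)|$ is tight and one can select $L$ with $\PP[\sup_{u\le t}|Y_N(u)|>L]<\varepsilon/2$ uniformly in $N$. The lower semicontinuity of $f\mapsto\sup_{s\le K}|f(s)|$ on $D^d(J_1)$, combined with Portmanteau and the non-triviality assumption \eqref{e:Xnontriv} (which forces $\PP[\sup_{s\le K}|\cX(s)|\le L]\downarrow 0$ as $K\to\infty$), then yields $\limsup_N\PP[S_N(t)>K]<\varepsilon$ for $K$ sufficiently large.

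\emph{Identification of the limit.} Take any subsequence along which $S_{N_k}\Rightarrow\cS$. By the joint tightness of $(X_N,Y_N,S_N)$ we may pass to a further subsequence with $(X_{N_k},Y_{N_k},S_{N_k})\Rightarrow(\cX,\tilde B,\cS)$ and invoke Skorohod representation, so the convergence may be taken almost sure on a common space. Since $\tilde B$ is continuous, $Y_{N_k}\to\tilde B$ locally uniformly; continuity of the Skorohod inverse map on non-decreasing cadlag functions gives $S_{N_k}^{-1}\to\cS^{-1}$ in $J_1$; composing with the uniformly convergent $Y_{N_k}$ yields $X_{N_k}=Y_{N_k}\circ S_{N_k}^{-1}\to\tilde B\circ\cS^{-1}$ in $J_1$, so $\cX=\tilde B\circ\cS^{-1}$ almost surely. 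Each coordinate $\cX^{(i)}=\tilde B^{(i)}\circ\cS^{-1}$ is then a continuous local martingale in the time-changed filtration whose pathwise quadratic variation equals $\cS^{-1}(t)$ (Dambis--Dubins--Schwarz). Hence $\cS^{-1}$, and therefore $\cS$ by inversion, is a measurable functional of the path of $\cX$; its law is uniquely determined by the law of $\cX$, ruling out distinct subsequential limits and yielding full convergence $S_N\Rightarrow\cS$ in $D^1(M_1')$. When $\cS(0)=0$ the only source of disagreement between the $M_1$ and $M_1'$ topologies at the origin disappears, upgrading the convergence to $M_1$ (cf.~\cite[Sect.~13.6]{W02}).

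\emph{Main obstacle.} I expect the quadratic-variation identification of $\cS^{-1}$ to be the delicate step: the limiting processes $\tilde B$ and $\cS^{-1}$ are not a priori independent, so one must argue pathwise using the continuous-local-martingale structure of $\cX^{(i)}$ rather than treating $\cX$ as an explicit subordinated Brownian motion. A secondary issue is the classical lack of continuity of the composition map in Skorohod topologies, which is circumvented here by the continuity of the Brownian limit on the Skorohod-a.s.\ copy and by working at continuity points of $\cS^{-1}$.
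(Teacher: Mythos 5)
Your route --- establish tightness of $S_N$ via the non-triviality of $\cX$, then identify each subsequential limit by recovering $\cS^{-1}$ as the Dambis--Dubins--Schwarz clock of $\cX$ --- is genuinely different from the paper's, which works with $S_N^{-1}$ directly: it shows that $X_N$ are local martingales with uniformly bounded jumps (Lemma~\ref{lem:martingale}), invokes \cite[Corollary~VI.6.29]{JS03} to conclude $[X_N,X_N]\Rightarrow[\cX,\cX]$, identifies $\operatorname{trace}[X_N,X_N]_t\approx\sigma^2 S_N^{-1}(t)$ (Lemma~\ref{lem:quadvar}), and inverts. The paper thereby obtains convergence of $S_N^{-1}$ (no subsequences, no separate tightness step) \emph{and} the local-martingale structure of $\cX$ in one stroke.

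This is exactly where your proof has a gap. First, ``$S_{N_k}^{-1}\to\cS^{-1}$ in $J_1$'' does not follow from continuity of the inverse map on non-decreasing functions: by \cite[Corollary~13.6.4]{W02} that map is continuous (into $D^1(J_1)$) only at \emph{strictly} increasing unbounded limits. A subsequential weak limit $\cS$ of the non-decreasing $S_N$ is not a priori strictly increasing, nor unbounded; if $\cS$ has a flat stretch then $\cS^{-1}$ jumps while $S_{N_k}^{-1}$ has jumps of size $1/N_k$, so $J_1$ convergence of the inverses actually fails. Second, and more seriously, to invoke DDS you must know that each $\cX^{(i)}$ is a continuous local martingale with $[\cX^{(i)}]_t=\cS^{-1}(t)$ in a filtration to which $\cS^{-1}$ is adapted. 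You write $\cX=\tilde B\circ\cS^{-1}$ and appeal to the time-change picture, but in the joint limit $\tilde B$ and $\cS^{-1}$ are not known to be independent, and $\cS^{-1}(t)$ is not known to be a stopping time for any filtration making $\tilde B$ a martingale, so the claimed bracket is unjustified. You flag this as ``the delicate step'' but offer no argument to close it. The paper resolves it by establishing the local-martingale property \emph{before} the limit and quoting \cite[Corollary~VI.6.29]{JS03}, which is precisely a functional limit theorem transporting the bracket; without some such input the pathwise recovery of $\cS^{-1}$ from $\cX$ does not go through.

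Your tightness argument, via the inclusion $\{S_N(t)>K\}\subseteq\{\sup_{s\le K}|X_N(s)|\le\sup_{u\le t}|Y_N(u)|\}$ together with \eqref{e:Xnontriv}, is a nice observation and looks essentially correct (one should apply Portmanteau at a.s.\ continuity points $K$ of $\cX$), but it becomes superfluous once convergence of $S_N^{-1}$ is obtained directly as in the paper.
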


We first use this proposition to show \thmref{thm:class}.

\begin{proof}[Proof of \thmref{thm:class}]
  By \propref{prop:convclock}, $S_N$ converge to some process
  $\mathcal S$ in distribution on $D^d(M'_1)$. This convergence implies
  the convergence of $S_N(t)$ to $\mathcal S(t)$ for all but countably
  many $t\in [0,\infty)$, cf.~\cite[Theorem~11.6.6 and Corollary~12.2.1]{W02}.
	The non-triviality assumption~\eqref{e:Xnontriv}
  implies \eqref{e:Snontriv}. We can thus apply \thmref{thm:general}. By
  this theorem there are only two possibilities, either
  $\cS(t)=Mt$ or $\cS(t)=V_{\alpha}(t)$.
  Since in both cases $\cS(0)=0$,
  the convergence of $S_N$ actually holds in the $M_1$-topology.

  The possible limits
  $\mathcal S$ are in the subspace $D^1_{u,\uparrow\uparrow}$ of
  unbounded strictly increasing functions from $[0,\infty)$ to $\RR$, and their
  inverses are continuous. By \cite[Corollary~13.6.4]{W02}, the inverse
  map from the space $D^1_{u,\uparrow}(M_1)$ of unbounded non-decreasing
	functions to $D^1(J_1)$ is continuous at
  $D^1_{u,\uparrow\uparrow}$, therefore $S_N^{-1}$ converge to $\cS^{-1}$
  in $\PP$-distribution on $D^1(J_1)$. Moreover, the rescaled random
  walks $Y_N$ converge in $\PP$-distribution on $D^d(J_1)$ to a standard
  $d$-dimensional Brownian motion $B$.

  To proceed, we need
  to show that $B$ and the limit clock process $\cS$ are independent.
	This is trivial for the case $\cS(t)=Mt$, so we may assume that $\cS=V_{\alpha}$.
  We will use \cite[Lemma~15.6]{K02} which applied to our
  situation states that if
  $B,\cS$ are such that $B(0)=\cS(0)=0$ and the process
  $(B,\cS)\in D^{d+1}$ has independent increments and no fixed jumps, $\cS$
  is a.s.~a step process and $\Delta B \cdot \Delta \cS = 0$ a.s.,~then
  $B$ and $\cS$ are independent. The only assumption that remains to be
  verified is that $(B,\cS)$ has jointly independent increments.

  For fixed times $0=t_0<t_1<\cdots<t_n=t$, consider the version
  $\tilde{S}(\floor{Nt})$ from \eqref{def:stilde} in the proof
  of \lemref{lem:indincr}. We have seen that every increment
  $\tilde{S}(\floor{Nt_i})-\tilde{S}(\floor{Nt_{i-1}})$ is independent of
  the increments $\{\xi_k:~k\notin[\floor{Nt_{j-1}},\floor{Nt_j}-1]\}$
	of the random walk $Y$. Since there is
  such a version of $\tilde{S}(\floor{Nt})$ for every choice of times $t_j$,
  and every such $\tilde{S}(\floor{Nt})$ converges to $\cS$ after
  normalization, we obtain that for the limit $\cS$ every increment
  $\cS(t)-\cS(s)$ is independent of $\{B(u):~u\notin[t,s]\}$. Since both
  $B$ and $\cS$ have independent increments, this implies that $(B,\cS)$
  has jointly independent increments. Applying \cite[Lemma~15.6]{K02}
  it follows that the two limit processes $B$
  and $\cS$, and thus also $B$ and $\cS^{-1}$ are
  independent.

  It follows that $(Y_N,S_N^{-1})$ converge in distribution
  on $D^d(J_1)\times D^1_{u,\uparrow}(J_1)$ to $(B,\cS^{-1})$. By
  \cite[Theorem~13.2.2]{W02}, the composition map from $D^d(J_1)\times
  D^1_{u,\uparrow}(J_1)$ to $D^d(J_1)$ taking $\left(y(t),s(t)\right)$
  to $y\left(s(t)\right)$ is continuous at $(y,s)$ if $y$ is continuous
  and $s$ non-decreasing. From this we conclude that the compositions
  $X_N(t)=Y_N(S_N^{-1}(t))$ converge in distribution on $D^d(J_1)$
  to $B(\cS^{-1}(t))$ as required.
\end{proof}

For the proof of \propref{prop:convclock} we will relate the clock process
$S$ to the quadratic variation process of the RTRW $X$ and then apply
\cite[Corollary~VI.6.29]{JS03} which states that under some conditions,
whenever a sequence of processes converges in distribution, then so does
the sequence of their quadratic variations.

We need some definitions first. For a $d$-dimensional
pure-jump process $Z$, let $Z^{(i)}$ denote the $i$-th coordinate of $Z$,
and let $\Delta Z^{(i)}(t) = Z^{(i)} (t)- Z^{(i)}(t-)$ be the jump size
of $Z^{(i)}$ at time $t$. The quadratic variation process $[Z,Z]_t$ is
a $d\times d$ matrix-valued process, where the $(i,j)$-th entry is the
quadratic covariation of the $i$-th and $j$-th coordinate of $Z$, which is
\begin{equation*}
  [Z^{(i)},Z^{(j)}]_t = \sum_{0<s\leq t} \Delta
  Z^{(i)}(s)\Delta Z^{(j)}(s).
\end{equation*}

We proceed by relating the inverse $S_N^{-1}$ of the clock process
to the quadratic variation process of $X_N$.
\begin{lemma}
  \label{lem:quadvar}
  Under Assumption~\ref{ass:BM}, let $[X_N,X_N]_t$ be the quadratic
	variation process of $X_N$, and define
  $\sigma^2=\EE[|\cA \xi_j|^2]$ (recall \eqref{e:bmlimit} and \eqref{e:XN}
	for the notation). Then for every $t>0$,
  \begin{equation*}
    \frac{
      \operatorname{trace} [X_N,X_N]_t}{\sigma^2 S_N^{-1}(t) }
    \xrightarrow{N\to\infty} 1 \text{ in } \PP\text{-probability}.
  \end{equation*}
\end{lemma}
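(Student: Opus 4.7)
The plan is to compute the quadratic variation of $X_N$ directly from its jump structure and then apply the strong law of large numbers along a random index.

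I would first observe that $X$ is a pure-jump process whose $k$-th jump (for $k\geq 1$) occurs at time $S(k)$ and has size $\xi_k = Y(k)-Y(k-1)$. Since $X_N(t) = \frac{1}{\sqrt N}\mathcal A X(a_N t)$, the jumps of $X_N$ occur at the times $S(k)/a_N$, $k\geq 1$, with sizes $\frac{1}{\sqrt N}\mathcal A\xi_k$, and the defining formula for the quadratic covariation immediately gives
\begin{equation*}
\operatorname{trace}[X_N,X_N]_t \;=\; \frac{1}{N}\sum_{k\geq 1:\,S(k)\leq a_N t} |\mathcal A \xi_k|^2.
\end{equation*}

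Next, setting $M_N := S^{-1}(a_N t)$, the sum above runs over $k=1,\dots,M_N-1$, and a direct computation from $S_N(s) = S(\lfloor Ns\rfloor)/a_N$ shows that $S_N^{-1}(t) = M_N/N$. Dividing by $\sigma^2 S_N^{-1}(t)$ therefore yields
\begin{equation*}
\frac{\operatorname{trace}[X_N,X_N]_t}{\sigma^2 S_N^{-1}(t)} \;=\; \frac{1}{\sigma^2 M_N}\sum_{k=1}^{M_N-1} |\mathcal A \xi_k|^2.
\end{equation*}
The finite-range part of Assumption~\ref{ass:BM} gives $\EE[|\mathcal A \xi_1|^2] = \sigma^2 < \infty$, so the classical SLLN yields $\frac{1}{n}\sum_{k=1}^n |\mathcal A \xi_k|^2 \to \sigma^2$ $\PP$-a.s. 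Since each $\pi_x$ is supported on $(0,\infty)$ we have $\tilde\tau_k > 0$ a.s., hence $S(k)\to\infty$ a.s., hence $M_N\to\infty$ a.s.\ as $a_N\nearrow\infty$; composing the SLLN with the random index $M_N$ then shows that the right-hand side above converges to $1$ $\PP$-a.s., and a fortiori in $\PP$-probability.

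There is no substantial obstacle here: the lemma is essentially a bookkeeping identity relating $S_N^{-1}$ (the object appearing inside $X_N$) to the quadratic variation functional, preparing the ground for the invariance-principle argument via \cite[Corollary~VI.6.29]{JS03} in the subsequent proof of \propref{prop:convclock}. The only mildly delicate point worth spelling out is that the random index $M_N$ depends on the $\xi_k$'s themselves, but this is immaterial: since the SLLN holds on a single event of full $\PP$-measure, it is preserved along any a.s.\ divergent random sequence, regardless of its dependence structure relative to the $\xi_k$'s.
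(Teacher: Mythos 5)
Your argument coincides with the paper's: both compute $\operatorname{trace}[X_N,X_N]_t$ as $\tfrac{1}{N}\sum_k |\cA\xi_k|^2$ summed up to the random index $S^{-1}(a_N t)$, identify $S_N^{-1}(t)=S^{-1}(a_Nt)/N$, and invoke the strong law of large numbers along the a.s.\ divergent random index. Your additional remark justifying the LLN along the random index $M_N$ (the SLLN holding on a single full-measure event) is the correct way to make precise a step the paper leaves implicit.
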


\begin{proof}
  Easy computation yields
  \begin{equation*}
    \operatorname{trace}[X_N,X_N]_t = \sum_{i=1}^d
    \sum_{0<s\leq t} (\Delta X_N^{(i)}(s))^2 =  \frac{1}{N}
    \sum_{j\leq S^{-1}(a_N t)} |\cA \xi_j|^2.
  \end{equation*}
  The process $S^{-1}$ has increments of size 1, and since the
  times between increments are a.s.~finite,
  $S^{-1}(a_N t) \nearrow \infty$ a.s.~as $N\to\infty$. Therefore, since
  $\sigma^2=\EE[|\cA \xi_j|^2]<\infty$ by Assumption~\ref{ass:BM}, the
  law of large numbers implies
  \begin{equation*}
    \PP\left[\left|\frac{N}{S^{-1}(a_N t)} \operatorname{trace}
		[X_N,X_N]_t - \sigma^2 \right| > \epsilon\right] \longrightarrow 0
		\text{ as } N\to\infty \text{ for every } \epsilon>0.
  \end{equation*}
  Noting that $\frac{1}{N}S^{-1}(a_N t) = S^{-1}_N(t)$ finishes the proof.
\end{proof}

We now check that the assumptions for \cite[Corollary VI.6.29]{JS03}
are fulfilled.

\begin{lemma}
  \label{lem:martingale}
  If Assumption~\ref{ass:BM} holds, then the rescaled processes $X_N$ are
	local martingales (with respect to the natural filtration $\sigma(X_N(t))$)
	with bounded increments.
\end{lemma}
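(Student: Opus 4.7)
The idea is to exploit the pure-jump structure of $X_N$ together with the centredness and the independence of the one-step distribution. Writing $X(t)=Y(S^{-1}(t))$, the rescaled process $X_N(t)=N^{-1/2}\mathcal A X(a_N t)$ is piecewise constant and right-continuous, with jumps occurring precisely at the instants $\tau_k:=S(k)/a_N$ and of magnitude $\Delta X_N(\tau_k)=N^{-1/2}\mathcal A \xi_k$. Assumption~\ref{ass:BM} provides $|\xi_k|\le C$ a.s., so $|\Delta X_N|\le \|\mathcal A\| C/\sqrt N$; this already gives the bounded-increments part of the claim.

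For the local-martingale property I would use $\{\tau_n\}_{n\ge 1}$ itself as the localizing sequence. Each $\tau_n$ is a stopping time for the natural filtration $(\cF_t)$ because it coincides with the $n$-th jump time of $X_N$, and since every waiting time $\tilde\tau_k$ is a.s.~strictly positive and finite we have $\tau_n\nearrow\infty$ a.s. Writing $N_t:=S^{-1}(a_N t)$ one has
\[ X_N(t\wedge\tau_n)=\tfrac{1}{\sqrt N}\,\mathcal A(\xi_1+\dots+\xi_{N_t\wedge n}), \]
and the crucial fact is that in the RTRW construction the sequence $\xi=(\xi_k)_{k\ge 1}$ is independent of the environment $\pi$ and of the visit durations $(\tau_x^i)$. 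Consequently, for each $k<n$ the step $\xi_{k+1}$ is independent of $\cF_{\tau_k}$---which is generated by $\xi_1,\dots,\xi_k$ together with the visit durations $\tilde\tau_0,\dots,\tilde\tau_{k-1}$ that can be read off from the jump times $\tau_1,\dots,\tau_k$---and has mean zero by $\EE[Y(1)]=0$. An inductive application of the optional sampling theorem to the discrete-time martingale $M_k:=N^{-1/2}\mathcal A(\xi_1+\dots+\xi_k)$ then yields the martingale property for $X_N(\cdot\wedge\tau_n)$.

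The only genuinely delicate point is the adaptedness claim, namely that $\cF_{\tau_k}$ is measurable with respect to $\sigma(\xi_1,\dots,\xi_k;\tilde\tau_0,\dots,\tilde\tau_{k-1})$; once this is pinned down, the centredness and independence of $\xi_{k+1}$ make the conditional-expectation computation automatic. I expect this to be the main (and only) obstacle---everything else is standard bookkeeping for compound jump processes.
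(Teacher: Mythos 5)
Your plan follows the same route as the paper: take the jump times $S(n)$ (rescaled) as the localizing sequence, rewrite the stopped process as a stopped partial sum of the centred increments $\xi_k$, and invoke optional sampling for the discrete-time martingale $Y(k)=\xi_1+\cdots+\xi_k$ after noting that the clock is a stopping time. The paper makes two small simplifications you might adopt: it observes that the local-martingale property is unaffected by the deterministic rescaling, so it works directly with $X$ rather than dragging $N$ and $a_N$ through; and instead of an ``inductive'' application of optional sampling it makes a single application of Doob's theorem to the bounded stopping time $S^{-1}(t\wedge\sigma_l)$, using the identity $S^{-1}(t\wedge\sigma_l)=S^{-1}(t)\wedge l$.

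The one point you correctly flag but leave open is also where the paper does the real work: identifying the natural filtration of $X$ in terms of the filtration of $(Y,S)$. The paper introduces $\cF_k=\sigma(Y(j),S(j):j\le k)$, notes that $Y$ is an $\cF$-martingale and $S^{-1}(t)$ an $\cF$-stopping time, and then uses the time-change identity $\cG_t=\sigma(X(s):s\le t)=\cF_{S^{-1}(t)}$ together with right-continuity of $\cG$ (citing Kallenberg, Proposition~7.9). This is exactly the ``adaptedness claim'' you single out, and it is what licenses the optional-sampling step; without stating and justifying it your argument is not yet a proof. Everything else in your outline (bounded jumps of size $O(N^{-1/2})$, independence of $\xi_{k+1}$ from the past of $(Y,S)$, centredness) is the same as in the paper and correct.
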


\begin{proof}
  The increments are bounded by Assumption~\ref{ass:BM}. The local
  martingale property is unaffected by linear scaling, it is hence
  sufficient to prove it for the process $X$.

  We show that the sequence of
  stopping times $\sigma_l=S(l)$, $l\geq1$, is a localizing sequence
  for $X$, i.e.~we show that $(X(t\wedge \sigma_l))_{t\geq0}$ is a
  martingale for every $l\geq1$.

  We introduce the filtration $\cF_t=\sigma(Y(k), S(k):~k\leq t)$.
  Obviously, $Y$ is an $\cF$-martingale, and $S^{-1}(t)$ is an
	$\mathcal F$-stopping time for every $t\ge 0$, with $S^{-1}(t)\geq
	S^{-1}(s)$ for $t\geq s$. The natural filtration for $X$, $\cG_t =
	\sigma(X(t))$ satisfies $\cG_t = \cF_{S^{-1}(t)}$ and is right-continuous
	(see \cite[Proposition~7.9]{K02}).
  The sequence of random variables $\sigma_l$ is indeed an increasing sequence
  of $\cG$-stopping times ($\sigma_l$ is the time at which the process $X$
	jumps for the $l$-th time). Moreover, by definition
  $S^{-1}(t\wedge \sigma_l) = S^{-1}(t) \wedge S^{-1}(\sigma_l) \leq
	S^{-1}(\sigma_l)=l$.
  Applying Doob's optional sampling theorem
  (see e.g.~\cite[Theorem~7.12]{K02}) to the discrete-time martingale $Y$
  and the bounded stopping time $S^{-1}(t\wedge \sigma_l)$, we obtain
  \begin{equation*}
    \EE\left[X(t\wedge \sigma_l) \mid \cG_s\right] =
    \EE\left[Y\left(S^{-1}(t\wedge \sigma_l)\right)\mid \cF_{S^{-1}(s)}\right]
    = Y\left(S^{-1}(t\wedge \sigma_l) \wedge S^{-1}(s)\right) = X(s\wedge
      \sigma_l).
  \end{equation*}
  This completes the proof.
\end{proof}

We can now prove \propref{prop:convclock}.

\begin{proof}[Proof of \propref{prop:convclock}]
  By \lemref{lem:martingale}, $X_N$ are local martingales with
  bounded increments.
  \cite[Corollary~VI.6.29]{JS03} then implies  that the quadratic variation
  processes $[X_N,X_N]_t$ converge component-wise on $D^1(J_1)$ to the
  quadratic variation process $[\cX,\cX]_t$ of $\cX$.
  Since all jumps of the processes $[X_N^{(i)},X_N^{(i)}]_t$,
  $i=1,\dots,d$, are positive, \cite[Theorem 12.7.3
    (continuity of addition at limits with jumps of common sign)]{W02}
  yields that  $\operatorname{trace}[X_N,X_N]_t$ converges to some
  non-decreasing process in $D^1(M_1)$. From \lemref{lem:quadvar} it then
  follows that the  inverses $S_N^{-1}$ of the rescaled clock processes
  converge to some non-decreasing process $\cS^{-1}(t)$ in $D^1(M_1)$.

  For non-decreasing functions $x\in D^1$ the right-continuous inverse
  satisfies $(x^{-1})^{-1}=x$, and thus $S_N= (S_N^{-1})^{-1}$. Hence, by
  \cite[Theorem~13.6.1]{W02}, which ensures the continuity of the inverse
  operation, $S_N$ converges to $\cS$ in $D^1(M_1)$ provided that
  ${\cS(0)=(\cS^{-1})^{-1}(0)=0}$.

  If we do not know whether $\cS(0)=0$, this theorem does
  not apply. This issue can be solved by weakening the topology
  from $M_1$ to $M_1'$ (see \cite[Section~13.6.2]{W02} for details). In
  particular, \cite[Theorem~13.6.2]{W02} yields that $S_N$
  converge to $\cS$ in distribution in $D^1(M_1')$.
\end{proof}

%---------------------------------------------------------------------------------------------------------------------------------------------------------
%---------------------------------------------------------------------------------------------------------------------------------------------------------
%---------------------------------------------------------------------------------------------------------------------------------------------------------
%---------------------------------------------------------------------------------------------------------------------------------------------------------

\section{Proofs of sufficiency criteria}%<<<1
\label{sec:proofsuff}

\thmref{thm:brown}, giving a sufficient criterion for convergence to
Brownian motion, is an immediate consequence of the ergodicity of the
sequence of successive waiting times.

\begin{proof}[Proof of \thmref{thm:brown}]
  Consider $\tilde{\tau}=(\tilde{\tau}_k)_{k\geq0}$ and let $\theta:~\RR^{\NN}\to\RR^{\NN}$ be the
  left-shift along the sequence, which by \lemref{lem:ergod} acts ergodically along $\tilde{\tau}$.

  If $\EE[\tilde{\tau}_0]=M$ is finite, the function
  $f(\tilde{\tau})=\tilde\tau_0$ is integrable, and we can apply the
  ergodic theorem to $f$ to get
  \begin{align*}
    \lim_{N\to\infty} \frac{1}{N} S(\floor{Nt}) &=
    \lim_{N\to\infty} \frac{1}{N} \sum_{k=0}^{\floor{Nt}-1} \tilde{\tau}_k
    = \lim_{N\to\infty} t \frac{1}{Nt} \sum_{k=0}^{\floor{Nt}-1}
    f(\theta^k(\tilde{\tau})) \\
    &= t \EE\left[f(\tilde{\tau})\right] = Mt \text{ almost
      surely}.
  \end{align*}
  Thus we have that the rescaled clock processes $S_N$ converge in
  distribution on $D^1(J_1)$ to $Mt$, where the normalization is $a_N=N$.
  If additionally Assumption~\ref{ass:BM} holds, using the same arguments
  as in the proof of \thmref{thm:class} we conclude that the $X_N$
  converge and the limit $\cX$ is as in case $(i)$ of \thmref{thm:class}.
\end{proof}

%---------------------------------------------------------------------------------------------------------------------------------------------------------

Before starting the proof of \thmref{thm:fk}, which deals with the
convergence to the Fractional Kinetics, we briefly
sketch some examples that illustrate how different functions $f$ in
condition \eqref{cond:fk} arise.

First, consider the CTRW defined in \eqref{e:ctrw}. The waiting times
$\tau_x^i$ of this model lie in the domain of attraction of an $\alpha$-stable
law, that is there is a slowly varying function $\ell_0$ (in general
  different from $\ell$ of \eqref{e:ctrw}) such that the sum of $N$
independent waiting times normalized by $a_N=N^{1/\alpha}\ell _0(N)$
converges to an $\alpha$-stable random variable, see
e.g.~\cite[Theorem~4.5.1]{W02}. Thus the quenched Laplace transform
(which is deterministic here) satisfies
\begin{equation*}
  \hat{\pi}_0(\lambda/a_N) = \exp\left\{-c'\lambda^{\alpha}{N}^{-1}(1+o(1))
		\right\}
  \quad \text{ as } N\to\infty
\end{equation*}
for some $c'>0$. Taking this to the power $r\ell^{*}(N)$ it follows that the
CTRW satisfies condition \eqref{cond:fk} with $a_N=N^{1/\alpha}\ell_0(N)$
and $f(r)=r$.

Secondly, consider the following simplified Bouchaud trap model
(cf.~\eqref{e:btm1}). Let $\pi_x=\delta_{\tau_x}$ where the $\tau_x$,
$x\in \mathbb Z^d$,
are heavy-tailed i.i.d.~random variables, that is
\begin{equation*}
	\PP[\tau_x > u] = cu^{-\alpha}(1+o(1)) \text{ as } u\to\infty.
\end{equation*}
Then the quenched Laplace transform satisfies
\begin{equation*}
	\hat{\pi}_0(\lambda/a_N) = \exp\{-\lambda a_N^{-1} \tau_0\}.
\end{equation*}
Taking this to the power $r\ell^{*}(N)$ and taking the expectation over
$\tau_0$, this is the Laplace transform of a random variable in the
normal domain of attraction of an $\alpha$-stable law, evaluated at
$r \lambda  \ell^{* }(N)/a_N$. By normal domain of attraction we
mean that the sum of $N$ independent such random variables normalized by
$c'N^{1/\alpha}$ converges to an $\alpha$-stable random variable, see
e.g.~\cite[Theorem~4.5.2]{W02}. Thus choosing
$a_N = c'N^{1/\alpha} \ell^{*}(N)^{1-1/\alpha}$, the Laplace transform is
\begin{align*}
  \EE\left[\hat{\pi}_0(\lambda/a_N)^{r\ell^{*}(N)}\right]
	&= \EE\left[\exp\left\{-\frac{\lambda r}{c'\ell^{*}(N)^{-1/\alpha}}
		\frac{\tau_x}{N^{1/\alpha}}\right\}\right]\\
  &= \exp\left\{-c''\frac{\ell^{*}(N)}{N}\lambda^{\alpha} r^{\alpha}
		(1+o(1))\right\}
  \text{ as } N\to\infty.
\end{align*}
Condition \eqref{cond:fk} is thus satisfied for $f(r)=r^{\alpha}$.

To see that $f(r)$ can be more than just a power of $r$, consider the
following mixture of the above two models.  To this end, let us fix the
slowly varying function $\ell$ of \eqref{e:ctrw} so that the
normalization $a_N=N^{1/\alpha }\ell_0(N)$ of the first example agrees
with the normalization  $a_N=c'N^{1/\alpha }\ell^*(N)^{1-1/\alpha }$ of
the second example. (This is possible e.g.~when $
  1/\ell^*(N)\to\gamma\in(0,1)$, then also $\ell$ converges to a
positive constant, or when $\ell^*(N)\sim c\log^{-1}N$, as is the case for simple
random walk on $\ZZ^2$, then $\ell(N) \sim c' \ell^*(N)^{\alpha-1}$.)
The mixture is now defined as follows. For some $p\in(0,1)$, let each
$\pi_x$ with probability $p$ be a heavy-tailed distribution as in
\eqref{e:ctrw}, and with probability $1-p$, let $\pi_x$ be
$\delta_{\tau_x}$ where the $\tau_x$ are heavy-tailed random variables with
\begin{equation*}
  \PP[\tau_x > u] = cu^{-\alpha}(1+o(1)) \text{ as }u\to\infty.
\end{equation*}
Then, by combining the arguments above, condition
\eqref{cond:fk} is satisfied with the normalization
$a_N=c'N^{1/\alpha} \ell^{*}(N)^{1-1/\alpha}$ and
$f(r)=pr + (1-p) r^{\alpha}$.

\begin{proof}[Proof of \thmref{thm:fk}]
  By Theorem~\ref{thm:general} it is sufficient to show that
  \begin{equation}
    \label{e:critgoal}
    \lim_{N\to\infty}\mathbb E[\exp\{-\lambda S_N(t)\}] = e^{-c t\lambda^\alpha }
  \end{equation}
  for some $c\in (0,\infty)$, this is equivalent to convergence of $S_N$ to an
	$\alpha$-stable subordinator. Using the independence of the $\pi_x$'s,
  recalling that $\hat \pi_x $ denotes the Laplace transform of $\pi_x$,
  we have
  \begin{equation}
    \begin{split}
      \label{e:crita}
      \mathbb E\Big[\exp\Big\{-\frac{\lambda}{a_N} S(\floor{Nt})\Big\}\Big|Y\Big]
      &=\mathbb E\Big[\exp\Big\{-\frac{\lambda}{a_N}
          \sum_{x\in \mathbb Z^d}\sum_{i=1}^{L(x,\floor{Nt}-1)}\tau_x^i
          \Big\}\Big|Y\Big]
      \\&=\prod_{x\in \mathbb Z^d}\mathbb E\Big[\hat \pi_x({\lambda}/{a_N})
          ^{L(x,\floor{Nt}-1)}\Big\}\Big|Y\Big].
    \end{split}
  \end{equation}

  Treating the case when $Y$ is transient first, let
  $R^k(Nt)=\{x\in \mathbb Z^d: L(x,\floor{Nt}-1)=k\}$. By
  Lemma~\ref{lem:llnrangegeneral},
  $|R^k(Nt)|/(Nt)\xrightarrow{N\to\infty} \gamma^2 (1-\gamma )^{k-1}$ in
  probability. Using the translation invariance, the right-hand side of
  \eqref{e:crita} can be written as
  \begin{equation*}
    \exp\Big\{\sum_{k=1}^\infty |R^k(Nt)|
      \log \mathbb E\big[\hat \pi_0(\lambda /a_N)^k\big]\Big\}.
  \end{equation*}
  For arbitrary $M\in \mathbb N$, using the law of large numbers for
  $|R^k(Nt)|$ and assumption \eqref{cond:fk} with the continuity of $f$,
  \begin{equation}
    \label{e:critd}
    \sum_{k=1}^M |R^k(Nt)|
    \log \mathbb E\big[\hat \pi_0(\lambda /a_N)^k\big]
    \xrightarrow{N\to\infty}
    - t \lambda^\alpha \sum_{k=1}^M
    f(k \gamma)\gamma (1-\gamma )^{k-1},
  \end{equation}
  in probability. Applying Jensen's inequality, it is easy to see that
  $f(k)$ grows at most linearly with $k$, so the right-hand side of the
  above expression converges as $M\to\infty$ to a finite value, by
  assumptions of the theorem. On the other hand, by Jensen's inequality
  again,
  for every $\delta >0$
  \begin{equation}
    \begin{split}
      \label{e:critb}
      \mathbb P&\Big[-\sum_{k=M}^\infty|R^k(Nt)|
        \log \mathbb E\big[\hat \pi_0(\lambda /a_N)^k\big]\ge \delta \Big]
    \\& \le
      \mathbb P\Big[-\log  \mathbb E\big[\hat \pi_0(\lambda /a_N)\big]
				\sum_{k=M}^\infty|R^k(Nt)| k\ge \delta \Big].
    \end{split}
  \end{equation}
  By the Markov inequality, for $0<c_1< -\log (1-\gamma )$,
    $\mathbb P[|R^k(Nt)|/(Nt)\ge e^{-c_1 k}] \le e^{-c'k}$
  uniformly for all $k\ge M$ and $N$ large enough, and thus by a union bound
  \begin{equation}
    \label{e:critc}
    \PP\big[\exists k\ge M \text{ such that } |R^k(Nt)|/ (Nt)\ge e^{-c_1k }\big]
		\le C e^{-c'M}
  \end{equation}
  uniformly in $N$. Using \eqref{e:critc} and the fact that
  $\log  \mathbb E\big[\hat \pi_0(\lambda /a_N)\big]$ is finite by
  assumption, it follows that the left-hand side of \eqref{e:critb}
  converges to $0$ in probability when $N\to\infty$ and then $M\to
  \infty$, and therefore \eqref{e:critd} also holds with $M=\infty$.
  Using the bounded convergence theorem, it then follows that
  \begin{equation*}
    \begin{split}
      \mathbb E&\Big[\exp\Big\{-\frac{\lambda}{a_N} S(\floor{Nt})\Big\}\Big] =
      \mathbb E\Big[\exp\Big\{\sum_{k=1}^\infty |R^k(Nt)|
        \log \mathbb E\big[\hat \pi_0(\lambda /a_N)^k\big]\Big\}\Big]
    \\&\xrightarrow{N\to\infty} \exp\Big\{- t \lambda^\alpha \sum_{k=1}^\infty
      f(k\gamma)\gamma(1-\gamma )^{k-1}\Big\},
    \end{split}
  \end{equation*}
  which proves \eqref{e:critgoal} in the transient case.

  To treat the recurrent case, we fix $\beta >0$ small and define for
  $k\ge1$
  \begin{equation*}
    R_\beta^k(Nt)=\{x\in \mathbb Z^d: (k-1)\beta\ell^*(N) <L(x,\floor{Nt}-1)\le
      k\beta\ell^*(N) \}.
  \end{equation*}
  By Lemma~\ref{lem:llnrangegeneral},
  $|R_\beta^k(Nt)|\ell^*(N)/(Nt)\xrightarrow{N\to\infty} e^{-(k-1)\beta} -e^{-k\beta }$ in
  probability. The right-hand side of \eqref{e:crita} can be
  bounded from above by
  \begin{equation*}
    \exp\Big\{\sum_{k=1}^\infty |R_\beta^k(Nt)|
      \log \mathbb E\big[\hat \pi_0(\lambda /a_N)^{\beta(k-1)\ell^*(N)}\big]\Big\},
  \end{equation*}
  and from below by
  \begin{equation*}
    \exp\Big\{\sum_{k=1}^\infty |R_\beta^k(Nt)|
      \log \mathbb E\big[\hat \pi_0(\lambda /a_N)^{\beta k\ell^*(N)}\big]\Big\}.
  \end{equation*}
  Following the same steps as in the transient case, it can be easily shown that
  \begin{equation*}
    \begin{split}
      &\exp\Big\{-t \lambda^\alpha  \sum_{k=1}^\infty f(\beta
          (k-1))\big(e^{-(k-1)\beta}
          -e^{-k\beta }\big)\Big\} \le
      \lim_{N\to\infty} \mathbb E\big[e^{-\lambda S_N(t)}\big]
      \\&\le
      \exp\Big\{-t \lambda^\alpha  \sum_{k=1}^\infty f(\beta k)\big(e^{-(k-1)\beta}
          -e^{-k\beta }\big)\Big\}
    \end{split}
  \end{equation*}
  Since $f$ is a monotone function, the sums in the above expression can
  be viewed as lower and upper Riemann sums for the integral
  $\int_{0}^\infty f(x)e^{-x}\,dx$ to which they tend when $\beta \to 0$.
  This integral is finite since as argued before $f$ grows at most linearly,
	and \eqref{e:critgoal} is proved in the recurrent case.
\end{proof}

%---------------------------------------------------------------------------------------------------------------------------------------------------------
%---------------------------------------------------------------------------------------------------------------------------------------------------------
%---------------------------------------------------------------------------------------------------------------------------------------------------------
%---------------------------------------------------------------------------------------------------------------------------------------------------------

\section{Ignoring small sets} %<<<1
\label{sec:proofs}

In this section we prove Lemma~\ref{lem:badtimes} which allows us to
ignore small sets when dealing with the clock process.

We first
assume that the random walk $Y$ is transient,  that is
$1/\ell^*(n)\to \gamma\in(0,1)$ as $n\to\infty$. We start by noting that
for every $x\in R(\floor{Nt})$ and $i\in\{1,\dots,L(x,\floor{Nt}-1)\}$,
since $(\tau_x^i)_{i\ge 1}$ are i.i.d.,
\begin{equation}
  \label{e:xtoone}
  \EE\left[\frac{\tau_x^i}
    {S(\floor{Nt})} ~\Big|~ Y \right] =
  \EE\left[\frac{\tau_x^1}{S(\floor{Nt})}
    ~\Big|~ Y \right].
\end{equation}
For fixed $0\leq l< \floor{Nt}$,
let $x=Y(l)$ and $i=L(Y(l),l)$, that is $\tilde \tau_l = \tau_x^i$. Using
\eqref{e:xtoone} and the fact that $(\tau_x^1)_{x\in \mathbb Z^d}$ are
i.i.d.~under $\mathbb P$,
\begin{equation}
  \begin{split}
    \EE\left[\frac{\tilde{\tau}_l}{S(\floor{Nt})} ~\Big|~ Y \right]
    &=  \EE\left[\frac{\tau_x^i}{\sum_{y\in R(\floor{Nt})}
        \sum_{j=1}^{L(x,\floor{Nt}-1)} \tau_y^j} ~\Big|~ Y \right] \\ &\leq
    \EE\left[\frac{\tau_x^1}{\sum_{y\in R(\floor{Nt})} \tau_y^1} ~\Big|~
      Y \right] \\ &= \frac{1}{|R(\floor{Nt})|} \sum_{z\in R(\floor{Nt})}
    \EE\left[\frac{\tau_z^1}{\sum_{y\in R(\floor{Nt})} \tau_y^1} ~\Big|~ Y
    \right] \\ &= \frac{1}{|R(\floor{Nt})|}.
  \end{split}
  \label{eq:boundfirst}
\end{equation}
By the law of large numbers for $R(n)$ (\lemref{lem:llnrangegeneral})
in the transient case, there is a constant $C<\infty$ such that
for all $N$ large enough
\begin{equation*}
  \PP\left[ |R(\floor{Nt})| < CN\right ] <
  \epsilon.
\end{equation*}
Hence, for $N$ large enough,
\begin{align*}
  \PP\bigg[\sum_{l\in \cB}& \tilde{\tau}_l \geq \delta
    S(\floor{Nt}),~|\cB|\leq \epsilon N \bigg] \\
  &\leq \PP\bigg[\sum_{l\in \cB} \tilde{\tau}_l \geq \delta
    S(\floor{Nt}) ,~|\cB|\leq \epsilon N,~|R(\floor{Nt})|\geq
    CN\bigg] + \epsilon.
  \intertext{Using the Markov inequality and \eqref{eq:boundfirst}, this is bounded from above by}
  %&\leq \frac{1}{\delta} \EE\left[\frac{ \sum_{l\in \cB}
  %    \tilde{\tau}_l}{S(\floor{Nt})} \Ind{|\cB|\leq \epsilon N}
  %  \Ind{|R(\floor{Nt})| \geq CN} \right] +\epsilon \\
  &\le  \frac{1}{\delta} \EE\left[ \sum_{l\in \cB}
    \EE\left[\frac{\tilde{\tau}_l}{S(\floor{Nt})} ~\Big|~ Y \right]
    \Ind{|\cB|\leq \epsilon N} \Ind{|R(\floor{Nt})| \geq CN} \right]
  +\epsilon. \\
  &\leq  \frac{1}{\delta} \EE\left[\frac{|\cB| }{|R(\floor{Nt})|}
    \Ind{|\cB|\leq \epsilon N} \Ind{|R(\floor{Nt})|\geq CN} \right]
  +\epsilon
  \\&\leq \frac{\epsilon}{C\delta} +\epsilon.
\end{align*}
Letting $N\to\infty$ and then $\epsilon\to0$ completes the proof of the
lemma in the transient case.

We now consider the recurrent case. Let
$R_{\cB}= \{Y(l):l\in \mathcal B\}$, and
for $x\in R_{\cB}$ let $L_{\cB}(x)=|\{l\in \mathcal B: Y(l)=x\}|$.
Fix some small $\beta>0$ and let
\begin{equation*}
  \begin{split}
    R_{>\beta}&=\{x\in R(\floor{Nt}):L(x,\floor{Nt}-1) > \beta \ell^*(N) \},
    \\R_{\leq\beta}&=\{x\in R(\floor{Nt}):L(x,\floor{Nt}-1) \leq \beta \ell^*(N)
      \}.
  \end{split}
\end{equation*}
By \lemref{lem:llnrangegeneral}, the sizes of $R_{>\beta }$ and
$R_{\le \beta }$ satisfy weak laws of large numbers with respective
averages $Nte^{-\beta }/\ell^*(N)(1+o(1))$ and
$Nt(1-e^{-\beta })/\ell^*(N)(1+o(1))$.
In particular for $C_{\beta} = (1-\epsilon) e^{-\beta}t$ and $c_{\beta} =
(1+\epsilon)(1-e^{-\beta})t$, for all $N$ large enough,
\begin{equation*}
  \PP\left[|R_{>\beta}| < C_{\beta} \frac{N}{\ell^*(N)}
  \right] + \PP\left[|R_{\leq\beta}| > c_\beta \frac{N}{\ell^*(N)} \right]
  \leq \epsilon.
\end{equation*}
Therefore, for $N$ large enough,
\begin{align}
  &\PP\bigg[\sum_{l\in \cB} \tilde{\tau}_l \geq \delta
    S(\floor{Nt}),~|\cB|\leq \epsilon N \bigg] \nonumber\\
  &\leq \PP\bigg[\sum_{x\in R_{\cB}\cap R_{>\beta}} \sum_{i=1
    }^{L_{\cB}(x)} \tau_x^i \geq \frac{\delta}{2}
    S(\floor{Nt}),~|\cB|\leq \epsilon N ,~|R_{>\beta}|\geq
    C_{\beta}\frac{N}{\log N}\bigg] \label{eq:awfulthing1} \\
  &+ \PP\bigg[\sum_{x\in R_{\cB}\cap R_{\leq\beta}}
    \sum_{i=1}^{L_{\cB}(x)} \tau_x^i \geq
    \frac{\delta}{2} S(\floor{Nt}),~|R_{>\beta}|\geq
    C_{\beta}\frac{N}{\log N},~|R_{\leq\beta}| \leq  c_\beta
    \frac{N}{\log N}\bigg] +\epsilon.\label{eq:awfulthing2}
\end{align}
Using \eqref{e:xtoone} and
the similar reasoning as in the transient case, since
$(\sum_{i=1}^{\beta\ell^*(N)} \tau_x^i)_{x\in \mathbb Z^d}$
are i.i.d.~with respect to the annealed measure
and independent of $Y$, we have for
$x\in R_{\cB}\cap R_{>\beta}$,
\begin{equation*}
  \begin{split}
    \EE\left[ \frac{
        \sum_{i=1}^{L_{\cB}(x)} \tau_x^i}{ S(\floor{Nt})}
      ~\Big|~ Y \right]
   % &= L_{\cB}(x) \EE\left[
   %   \frac{\tau_x^1}{ S(\floor{Nt})} ~\Big|~ Y \right]
   % \\
    &=
    \frac{L_{\cB}(x)}{\beta\ell^*(N)} \EE\left[ \frac{
        \sum_{i=1}^{\beta\ell^*(N)} \tau_x^i}{ S(\floor{Nt})} ~\Big|~ Y \right]
    \\&\leq \frac{L_{\cB}(x)}{\beta\ell^*(N)} \EE\left[
      \frac{ \sum_{i=1}^{\beta\ell^*(N)} \tau_x^i}{ \sum_{y \in
          R_{>\beta}} \sum_{i=1}^{\beta \ell^*(N)} \tau_y^i } ~\Big|~
      Y \right]
    \\&= \frac{L_{\cB}(x)}{|R_{>\beta}|\beta\ell^*(N)}.
  \end{split}
\end{equation*}
Therefore, using the Markov inequality,
\begin{equation}
  \begin{split}
    \label{e:bound1}
    \eqref{eq:awfulthing1}&\le
  %  \PP&\left[\sum_{x\in R_{\cB}\cap R_{>\beta}}
  %    \sum_{i=1}^{L_{\cB}(x)} \tau_x^i \geq \frac{\delta}{2}
  %    S(\floor{Nt}),~|\cB|\leq\epsilon N,~|R_{>\beta}|\geq
  %    C_{\beta}\frac{N}{\ell^*(N)}\right] \\
  %  &\leq \frac{2}{\delta} \EE\left[ \frac{\sum_{x\in R_{\cB}\cap
  %        R_{>\beta}} \sum_{i=1}^{L_{\cB}(x)} \tau_x^i}
  %    {S(\floor{Nt})} \Ind{|\cB|\leq\epsilon N} \Ind{|R_{>\beta}|\geq
  %      C_{\beta}\frac{N}{\ell^*(N)}} \right] \\
    \frac{2}{\delta} \EE\left[ \sum_{x\in R_{\cB}\cap R_{>\beta}}
      \EE\left[ \frac{ \sum_{i=1}^{L_{\cB}(x)} \tau_x^i}{
          S(\floor{Nt})} ~\Big|~ Y \right] \Ind{|\cB|\leq\epsilon N}
      \Ind{|R_{>\beta}|\geq C_{\beta}\frac{N}{\ell^*(N)}} \right] \\
    &\leq \frac{2}{\delta} \EE\left[
      \sum_{x\in R_{\cB}\cap R_{>\beta}}
      \frac{L_{\cB}(x)}{|R_{>\beta}|\beta\ell^*(N)}
      \Ind{|\cB|\leq\epsilon N} \Ind{|R_{>\beta}|\geq
        C_{\beta}\frac{N}{\ell^*(N)}} \right] \\
  %  &\leq \frac{2}{\delta} \EE\left[
  %    \frac{|\cB|}{|R_{>\beta}|\beta\ell^*(N)} \Ind{|\cB|\leq\epsilon N}
  %    \Ind{|R_{>\beta}|\geq C_{\beta}\frac{N}{\ell^*(N)}} \right] \\
    &\leq \frac{2\epsilon}{\delta\beta C_{\beta}}.
  \end{split}
\end{equation}
where for the last inequality we used the fact that
$\sum_{x} L_{\cB}(x)\le |\cB| \le \varepsilon N$.

It remains to bound \eqref{eq:awfulthing2}. Using again the fact that
$(\sum_{i=1}^{\beta\ell^*(N)} \tau_x^i)_{x\in \mathbb Z^d}$
are i.i.d.~with respect to the annealed measure and independent of $Y$,
\begin{align*}
  \PP&\bigg[\sum_{x\in R_{\cB}\cap R_{\leq\beta}}
    \sum_{i=1}^{L_{\cB}(x)} \tau_x^i \geq \frac{\delta}{2}
    S(\floor{Nt}) ~\Big|~ Y \bigg] \\
  &\leq \PP\bigg[\left(1+\frac{\delta}{2}\right) \sum_{x\in
      R_{\cB}\cap R_{\leq\beta}} \sum_{i=1}^{\beta\ell^*(N)} \tau_x^i
    \geq \frac{\delta}{2} \sum_{x\in R_{>\beta}\cup(R_{\cB}\cap
        R_{\leq\beta})} \sum_{i=1}^{\beta\ell^*(N)} \tau_x^i ~\Big|~
    Y \bigg] \\
  &\leq \frac{2+\delta}{\delta} \EE\bigg[ \frac{ \sum_{x\in
        R_{\cB}\cap R_{\leq\beta}} \sum_{i=1}^{\beta\ell^*(N)}
      \tau_x^i} {\sum_{x\in R_{>\beta}\cup(R_{\cB}\cap R_{\leq\beta})}
      \sum_{i=1}^{\beta\ell^*(N)} \tau_x^i} ~\Big|~ Y \bigg] \\
  &= \frac{2+\delta}{\delta} \frac{|R_{\cB}\cap
    R_{\leq\beta}|}{|R_{>\beta}\cup(R_{\cB}\cap
      R_{\leq\beta})|}.
\end{align*}
Therefore,
\begin{equation}
  \label{e:bound2}
  \eqref{eq:awfulthing2}
  %\PP&\bigg[\sum_{x\in R_{\cB}\cap R_{\leq
  %      \beta}} \sum_{i=1}^{L_{\cB}(x)} \tau_x^i
  %  \geq \frac{\delta}{2} S(\floor{Nt}) ,~|R_{>\beta}|\geq
  %  C_{\beta}\frac{N}{\ell^*(N)},~|R_{\leq\beta}| \leq  c_\beta
  %  \frac{N}{\ell^*(N)}\bigg] \\
  \leq \frac{2+\delta}{\delta} \frac{c_{\beta}}{C_{\beta}}
  = \frac{2+\delta}{\delta} \frac{1+\epsilon}{1-\epsilon}
  \left(e^{\beta}-1\right).
\end{equation}

Combining \eqref{eq:awfulthing1}--\eqref{e:bound2} and letting
$N\to\infty$, then $\epsilon\to0$ and finally $\beta \to 0$ finishes the proof of the lemma in
the recurrent case.\qed

%---------------------------------------------------------------------------------------------------------------------------------------------------------
%---------------------------------------------------------------------------------------------------------------------------------------------------------
%---------------------------------------------------------------------------------------------------------------------------------------------------------
%---------------------------------------------------------------------------------------------------------------------------------------------------------

\appendix
\section{Laws of large numbers for range-like objects} \label{appendix}
We prove here that Assumption~\ref{ass:slowvar} implies weak laws of large
numbers for several range-related quantities.
The proofs are based on the classical paper \cite{DE51},
see also \cite[Chapter 21]{Rev}.

Recall that
\begin{equation*}
  R(n)=\{x\in \ZZ^d:~L(x,n-1)>0\}
\end{equation*}
is the range of the random walk $Y$ up to time $n-1$. In the recurrent
case, i.e.~if $\ell^{*}(n)\to\infty$, define for $k\geq1$ and $\beta>0$
\begin{equation*}
  R^k_{\beta}(n)=\{x\in \ZZ^d:~ L(x,n-1) \in ((k-1),k]
    \beta\ell^*(n)\}
\end{equation*}
the set of vertices visited $(k-1)\beta\ell^*(n)$ to $k\beta\ell^{*}$ times
up to time $n-1$. In the transient case, if $1/\ell^{*}(n)\to\gamma \in (0,1)$, let for $k\geq1$
\begin{equation*}
  R^k(n) = \{x\in\ZZ^d:~L(x,n-1) = k\}
\end{equation*}
the vertices visited exactly $k$ times up to time $n-1$.

We say that a
sequence of random variables $Z_n$ satisfies the weak law of large numbers if
$Z_n/EZ_n\xrightarrow{n\to\infty}1$ in probability.

\begin{lemma}
  \label{lem:llnrangegeneral}
  \begin{enumerate}[(i)]
    \item
    If Assumption~\ref{ass:slowvar} holds, then
    $|R(n)|$ satisfies the weak law of large numbers with
    \begin{equation*}
      \mathbb E[|R(n)|] = \frac{n}{\ell^*(n)}(1+o(1)) \text{
        as }n\to\infty.
    \end{equation*}

    \item
    If in addition $\ell^*(n)\to \infty$ as $n\to\infty$, then
    $|R^k_\beta (n)|$ satisfies the weak law of large numbers for every
    $k\ge 1$ and $\beta >0$, and
    \begin{equation*}
      \mathbb E[|R^k_{\beta}(n)|] =
      (e^{-(k-1)\beta}-e^{-k\beta})\frac{n}{\ell^*(n)}(1+o(1))
      \text{ as }n\to\infty.
    \end{equation*}

    \item
    If, on the other hand, $1/\ell^{*}(n)\to \gamma \in(0,1)$,
    then $|R^k(n)|$ satisfies the weak law of large numbers for every
    $k\ge 1$, and
    \begin{equation*}
      \mathbb E[|R^k(n)|] = \gamma^2(1-\gamma)^{k-1} n (1+o(1))
			\text{ as }n\to\infty.
    \end{equation*}
  \end{enumerate}
\end{lemma}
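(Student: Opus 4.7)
The plan is to follow the Dvoretzky--Erd\H os strategy, adapted to our slowly-varying setting.

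For part (i), I would write $|R(n)| = \sum_{k=0}^{n-1}\psi_k$ with $\psi_k = \mathbf{1}\{Y(j)\neq Y(k)\text{ for all }j<k\}$. Time-reversing the path $(Y(0),\ldots,Y(k))$ shows that $\mathbb{E}[\psi_k]$ equals the probability that a fresh walk does not return to its origin in $k$ steps, i.e.~$r_k = 1/\ell^*(k)$. Hence $\mathbb{E}[|R(n)|] = \sum_{k=0}^{n-1} 1/\ell^*(k)$, which by the standard Karamata estimate for sums of slowly-varying sequences equals $(n/\ell^*(n))(1+o(1))$. For the weak law I would bound the variance. For $j<k$, the inclusion $\psi_k \le \mathbf{1}\{Y(k)\neq Y(i),\ i=j,\ldots,k-1\}$ followed by the Markov property at time $j$ yields $\mathbb{E}[\psi_j\psi_k] \le r_j\, r_{k-j}$, so $\operatorname{Cov}(\psi_j,\psi_k) \le r_j(r_{k-j}-r_k)$. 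Summing over $j<k\le n$ and exploiting the slow variation of $\ell^*$ (so that $r_{k-j}-r_k$ is small whenever $j\ll k$) gives $\operatorname{Var}(|R(n)|) = o\bigl((n/\ell^*(n))^2\bigr)$, and Chebyshev concludes.

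For parts (ii) and (iii) I would express each quantity as a sum over the time $j$ of the first visit and use the Markov property to decouple the future. Given $\psi_j = 1$, the distribution of $L(Y(j),n-1)$ is, by Markov plus a further time-reversal, that of the number of visits to $0$ of a fresh random walk in $n-j$ steps. In the transient case $r_n\to\gamma$, this count is asymptotically $\text{Geom}(\gamma)$ uniformly in $j\le (1-\delta)n$, so the probability that a new vertex has exactly $k$ visits tends to $\gamma(1-\gamma)^{k-1}$; combining with the asymptotics of $\mathbb{E}[|R(n)|]$ from part (i) and observing that $j\in((1-\delta)n,n)$ contributes at most $O(\delta n)$ yields $\mathbb{E}[|R^k(n)|] = \gamma^2(1-\gamma)^{k-1} n(1+o(1))$. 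In the recurrent case, the tail $\mathbb{P}[L(Y(j),n-1)>m\mid \psi_j=1]$ is governed by $(1-1/\ell^*(n-j))^{m}$ up to lower-order terms; for $m=c\,\ell^*(n)$ with $n-j = \Theta(n)$ the slow variation of $\ell^*$ converts this to $e^{-c}(1+o(1))$, producing the stated $e^{-(k-1)\beta}-e^{-k\beta}$ factor.

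The main obstacle is upgrading these expectation asymptotics to weak laws in parts (ii) and (iii), for which I would need second-moment estimates analogous to the one in part (i). The strategy is to bound $\mathbb{E}\bigl[\mathbf{1}\{x\in R^k_\beta(n)\}\mathbf{1}\{y\in R^k_\beta(n)\}\bigr]$ by conditioning on the order and times of the first visits to $x$ and $y$, then applying the Markov property to decouple the two visit patterns up to a slowly-varying error. Pairs whose first-visit times are close contribute negligibly because $|R(n)|$ grows like $n/\ell^*(n)$, so typical pairs are well time-separated; on such pairs the Markov decoupling produces the product $(\mathbb{E}|R^k_\beta(n)|)^2(1+o(1))$, giving variance $o\bigl((n/\ell^*(n))^2\bigr)$. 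All the remaining asymptotic ingredients reduce to Karamata-type estimates for slowly varying sums and the elementary limit $(1-1/\ell^*(n))^{c\,\ell^*(n)}\to e^{-c}$.
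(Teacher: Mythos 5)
Parts (i) and (iii) of your sketch coincide with the paper's proof: same decomposition $|R(n)|=\sum\psi_k$, same increment-reversal giving $\mathbb{E}[\psi_k]=r_k$, same Karamata sum, same covariance bound $\mathbb{E}[\psi_i\psi_j]\le\mathbb{E}[\psi_i]\mathbb{E}[\psi_{j-i}]$, and in the transient case the geometric limit comes from $L(0,cn)\uparrow L(0,\infty)\sim\mathrm{Geom}(\gamma)$. The variance argument you describe for (ii)--(iii) (decoupling well-separated first-visit times via the Markov property and discarding close pairs) is also what the paper does.

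However, in part (ii) there is a genuine gap. You assert that $\mathbb{P}[L(Y(j),n-1)>m\mid\psi_j=1]$ is ``governed by $(1-1/\ell^*(n-j))^m$ up to lower-order terms'' and then close the proof with Karamata estimates and the elementary limit $(1-1/\ell^*(n))^{c\ell^*(n)}\to e^{-c}$. Only the \emph{upper} bound $\mathbb{P}[L(0,n)\ge m]\le(1-1/\ell^*(n))^{m-1}$ is elementary (each return separately has probability at most $1-r_n$ of occurring within $n$ steps). The matching lower bound is not: writing $L(0,n)\ge m$ as $\sum_{i=1}^{m-1}T_i\le n$ with $T_i$ the i.i.d.\ successive return times (tail $\mathbb{P}[T_i>n]=1/\ell^*(n)$), the event $\{\max_{i\le m-1}T_i\le n\}$ does \emph{not} obviously imply $\{\sum_{i\le m-1}T_i\le n\}$: the time budget $n$ could be exhausted by many moderately long excursions even if none individually exceeds $n$. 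The paper closes this hole with Darling's theorem (that for i.i.d.\ random variables with slowly varying tail, $\sum_{i\le n}T_i/\max_{i\le n}T_i\to1$ in probability), from which $\mathbb{P}[\sum_{i=1}^{\beta\ell^*(n)}T_i\le cn]=e^{-\beta}(1+o(1))$ follows. This is a non-trivial heavy-tail result, not a Karamata estimate, and your proposal needs to invoke it (or a substitute) to obtain the asymptotic equality rather than a one-sided bound.
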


\begin{proof}
	Note that for the simple random walk in
	$d\geq3$ and $d=2$ respectively, part (i) is a classical result
	from \cite{DE51}, part (iii) was hinted at in \cite[Theorem
	12]{ET60} and proved in \cite{P74}, whereas part (ii) is
	a direct consequence of \cite[Theorem 4]{DE51} and \cite[Theorem 2]{C07}.
  Part (i) above is proved exactly as in \cite{DE51}. We include its proof, since
  proofs of (ii) and (iii) are its extensions.
  Let $\psi_k$ be the indicator of the event that a new vertex is found at
  time $k$,
  \begin{equation*}
    \psi_k=\Ind{Y(l)\neq Y(k)
      \text{ for all } 0\leq l<k},
  \end{equation*}
  with $\psi_0=1$. Recall that $\xi_i$
  denote the i.i.d.~increments of the random walk $Y$. Then,
  \begin{equation}
    \label{eq:psirk}
    \begin{split}
      \mathbb E[\psi_k] &= \mathbb P\left[Y(k)\neq
        Y(k-1),~Y(k) \neq Y(k-2),\dots,~Y(k)\neq Y(0)\right] \\
      &= \mathbb P\left[\xi_k\neq 0,~\xi_k+\xi_{k-1}\neq
        0,\dots,~\xi_k+\cdots+\xi_1\neq 0\right] \\
      &= \mathbb P\left[\xi_1\neq 0,~\xi_1+\xi_{2}\neq
        0,\dots,~\xi_1+\cdots+\xi_{k}\neq 0\right] \\
      &= \mathbb P[Y(l)\neq 0 \text{ for } l=1,\dots,k] = r_k.
    \end{split}
  \end{equation}
  For a slowly varying function $\ell$,
  $\sum_{k=1}^n \ell(k) = n \ell(n)(1+o(1))$ as $n\to\infty$ (see
    e.g.~\cite[p.~55]{Sen76}). Therefore, by Assumption~\ref{ass:slowvar},
  \begin{equation}
    \label{e:ER}
    \mathbb E[|R(n)|] =
    \sum_{k=0}^{n-1} \mathbb E[\psi_k] =
    \frac{n}{\ell^*(n)}(1+o(1)) \text{ as }n\to\infty.
  \end{equation}

  To prove the weak law of large numbers,  we compute the variance.
  First note that for $i\leq j$, by the Markov property,
  \begin{equation}
    \begin{split}
      \mathbb E[\psi_i \psi_j] &=
      \mathbb E\left[\Ind{Y(l)\neq Y(i),~0\leq l<i} \Ind{Y(l)\neq Y(j),~0\leq
          l<j}\right] \\
      &\leq \mathbb E\left[\Ind{Y(l)\neq Y(i),~0\leq l<i} \Ind{Y(l)\neq
          Y(j),~i\leq l<j}\right] = \mathbb E[\psi_i] \mathbb E[\psi_{j-i}].
    \end{split}
    \label{eq:exppsipsi}
  \end{equation}
  Then,
  \begin{equation}
    \begin{split}
      \Var{|R(n)|} &= \sum_{0\leq i,j\leq n-1} \mathbb E[\psi_i \psi_j]
      - \mathbb E[\psi_i] \mathbb E[\psi_j]  \\
      &\leq 2 \sum_{i=0}^{n-1} \sum_{j=i}^{n-1} \mathbb E[\psi_i]
      \left(\mathbb E[\psi_{j-i}] -\mathbb E[\psi_j] \right) \\
      &\leq 2 \sum_{i=0}^{n-1} \mathbb E[\psi_i] \left( \max_{k=0,\dots,n-1}
        \sum_{j=k}^{n-1}\mathbb E[\psi_{j-k}] -\mathbb E[\psi_j]
      \right). \label{eq:psipsi1}
    \end{split}
  \end{equation}
  By \eqref{eq:psirk},  $\mathbb E[\psi_k]$ is non-increasing,
  therefore the maximum in \eqref{eq:psipsi1} is attained in
  $k=\frac{n}{2}$. The parenthesis in \eqref{eq:psipsi1} can then be
  estimated using elementary properties of slowly varying functions,
  \begin{align*}
    \sum_{j=\frac{n}{2}}^{n-1}\mathbb E[\psi_{j-\frac{n}{2}}]
    -\mathbb E[\psi_j] &= \sum_{j=0}^{\frac{n}{2}-1} \frac{1}{\ell^*(j)} -
    \sum_{j=\frac{n}{2}}^{n-1} \frac{1}{\ell^*(j)} \\
    &= \sum_{j=0}^{\frac{n}{2}-1} \frac{1}{\ell^*(j)} - \left(
      \sum_{j=0}^{n-1} \frac{1}{\ell^*(j)} - \sum_{j=0}^{\frac{n}{2}-1}
      \frac{1}{\ell^*(j)}\right) \\
    &= 2 \frac{\frac{n}{2}}{\ell^*(\frac{n}{2})}(1+o(1)) -
    \frac{n}{\ell^*(n)} (1+o(1)) = \frac{n}{\ell^*(n)} o(1) \text{
      as } n\to\infty.
  \end{align*}
  Inserting this into \eqref{eq:psipsi1}, we obtain
  \begin{equation*}
    \Var{|R(n)|} \leq 2
    \sum_{i=0}^{n-1} \mathbb E[\psi_i] \frac{n}{\ell^*(n)} o(1) =
    o\left(\left(\frac{n}{\ell^*(n)}\right)^2\right)\text{ as }
    n\to\infty,
  \end{equation*}
  and the weak law of large numbers for $|R(n)|$ follows by usual
  arguments.

  Before turning to part (ii), we note the following fact on return
  times. Let as before $H_0^1=\inf\{i>0:~Y(i)=0\}$ denote the time of the first return
  to $0$, and $H_0^k = \inf\{i > H_0^{k-1}:~Y(i)=0\}$
  the time of the $k$-th return to $0$. Let $T_i=H_0^i-H_0^{i-1}$
  (with $H_0^0=0$) be the successive return times. By the Markov
  property the $(T_i)_{i\geq1}$ are i.i.d., and $\mathbb P[T_i > n] = r_n =
  \frac{1}{\ell^*(n)}$ by Assumption~\ref{ass:slowvar}. If $\ell^*(k) \to
  \infty$, the $T_i$ are a.s.~finite and have slowly varying tail. It is
  well known (e.g.~\cite[Theorem~3.2]{Dar52}) that for such i.i.d.~random
  variables $T_i$,
  \begin{equation}
    \frac{\sum_{i=1}^n T_i}{\max_{i=1}^n T_i} \to 1 \text{
      in probability as } n\to \infty. \label{eq:summaxt1}
  \end{equation}
  Since $\ell^*(cn) \sim \ell^*(n)$ as $n\to\infty$,
  \begin{equation}
    \mathbb P\big[ \max \{T_i:1\le i\le \beta\ell^*(n)\}\leq cn\big]
    = \left(1-\frac{1}{\ell^*(cn)}\right)^{\beta\ell^*(n)} =
    e^{-\beta}(1+o(1))
    \text{ as } n\to\infty. \label{eq:summaxt2}
  \end{equation}
  From \eqref{eq:summaxt1} and \eqref{eq:summaxt2} we obtain for every $c>0$
  and $\beta>0$
  \begin{equation}
    \mathbb P\left[ L(0,cn) \geq \beta\ell^*(n) \right]
    = \mathbb P\left[ \sum_{i=1}^{\beta\ell^*(n)} T_i \leq cn \right] =
    e^{-\beta}(1+o(1)). \label{eq:sumT}
  \end{equation}

  For part (ii) we only prove the statement for
  $R_{\beta}(n)=R_{\beta}^1(n)$, the statement for $k>1$ follows easily
  by subtracting the claims with $\beta$ replaced by $\beta k $ and
  $\beta (k-1)$. Consider $\psi_k$ as above, and additionally define
  functions $\varphi_k=\Ind{L(Y(k),n-1)\leq \beta\ell^*(n)}$. Using the
  Markov property and translation invariance,
  \begin{equation}
    \begin{split}
      \mathbb E[|R_{\beta}(n)|] = \sum_{k=0}^{n-1}
      \mathbb E[\psi_k \varphi_k] &= \sum_{k=0}^{n-1} \mathbb E[\psi_k]
      \mathbb P\left[L(0,n-1-k)
        \leq \beta\ell^*(n)\right] \\
      &= \sum_{k=0}^{n-1} \frac{1}{\ell^*(k)} \mathbb P\left[
        \sum_{i=1}^{\beta\ell^*(n)} T_i \geq n-1-k\right].
    \end{split}
    \label{eq:exprn}
  \end{equation}
  If $k\leq (1-\delta)n$ for some $\delta>0$, then
  we can apply \eqref{eq:sumT}. Bounding the probability by one in the
  remaining cases, we see that
  \eqref{eq:exprn} is bounded from above by
  \begin{equation*}
    \begin{split}
      \mathbb E[|R_{\beta}(n)|]
      &\leq \sum_{k=0}^{(1-\delta)n}
      \frac{1}{\ell^*(k)} (1-e^{-\beta})(1+o_{\delta}(1)) +
      \sum_{(1-\delta)n<k<n} \frac{1}{\ell^*(k)}
      \\&= \frac{(1-\delta)n}{\ell^*(n)}
      (1-e^{-\beta})(1+o_{\delta}(1)) + \frac{\delta n}{\ell^*(n)},
    \end{split}
  \end{equation*}
  and from below by
  \begin{equation*}
    \mathbb E[|R_{\beta}(n)|] \geq
    \sum_{k=0}^{(1-\delta)n} \frac{1}{\ell^*(k)}
    (1-e^{-\beta})(1+o_{\delta}(1))= \frac{(1-\delta)n}{\ell^*(n)}
    (1-e^{-\beta})(1+o_{\delta}(1)).
  \end{equation*}
  Sending $\delta\to0$ proves the statement for $\mathbb E[|R_{\beta}(n)|]$.

  To bound the variance, we first note that for $i<i+\delta
  n\leq j \leq (1-\delta)n$, by the Markov property and using
  Assumption~\ref{ass:slowvar} and \eqref{eq:sumT},
  \begin{align}
    \mathbb E\left[\psi_i \varphi_i \psi_j \varphi_j
        \right] &\leq \mathbb E\left[ \psi_i \Ind{L(Y(i),i+\delta n) \leq
        \beta\ell^*(n)} \Ind{Y(k)\neq Y(j),~i+\delta n\leq k < j}
      \Ind{L(Y(j),n)\leq\beta\ell^*(n)} \right] \nonumber \\[2mm]
    &\leq \mathbb E[\psi_i]
    \mathbb P\big[L(0,\delta n) \leq \beta\ell^*(n)\big]
    \mathbb E\big[\psi_{j-i-\delta n}\big]
    \mathbb P\big[L(0,\delta n) \leq
      \beta\ell^*(n)\big] \nonumber \\
    &= \frac{1}{\ell^*(i)} (1-e^{-\beta})
    \frac{1}{\ell^*(j-i-\delta n)}
    (1-e^{-\beta})(1+o_{\delta}(1)). \label{eq:psixi1}
  \end{align}
  The variance of $|R_{\beta}(n)|$ is
  \begin{equation}
    \Var{|R_{\beta}(n)|} = 2\sum_{0\leq i\leq j\leq n-1}
    \mathbb E[\psi_i \varphi_i \psi_j \varphi_j] - \mathbb E[\psi_i
      \varphi_i] \mathbb E[\psi_j
      \varphi_j]. \label{eq:varbeta}
  \end{equation}
  For $i<i+\delta n\leq j \leq (1-\delta)n$ we can use \eqref{eq:psixi1}
  and \eqref{eq:exprn} to get
  \begin{equation}
    \begin{split}
      &\sum_{i<i+\delta n\leq j \leq (1-\delta)n}
      \mathbb E[\psi_i \xi_i \psi_j \xi_j] - \mathbb E[\psi_i \xi_i]
      \mathbb E[\psi_j \xi_j] \\
      &\leq (1-e^{-\beta})^2 \sum_{i<i+\delta n\leq j \leq (1-\delta)n}
      \frac{1}{\ell^*(i)} \left(  \frac{1}{\ell^*(j-i-\delta
            n)}(1+o_{\delta}(1)) - \frac{1}{\ell^*(j)} (1+o_{\delta}(1))
      \right)\\
      &= (1-e^{-\beta})^2 \sum_{i=0}^{(1-2\delta)n} \frac{1}{\ell^*(i)}
      \left( \sum_{j=0}^{(1-2\delta)n-i} \frac{1}{\ell^*(j)}
        (1+o_{\delta}(1)) - \sum_{j=i+\delta n}^{(1-\delta)n}
        \frac{1}{\ell^*(j)} (1+o_{\delta}(1)) \right) \\
      &= o_{\delta}\left( \left( \frac{n}{\ell^*(n)} \right)^2
      \right).
    \end{split}
    \label{eq:varbetagood}
  \end{equation}
  For the remaining $i,j$, using \eqref{eq:exppsipsi} we have
  \begin{equation}
    \begin{split}
      \sum_{i=0}^{n-1}\sum_{\substack{i\leq j <
          i+\delta n \\ (1-\delta)n < j < n}} &\mathbb E[\psi_i \xi_i \psi_j \xi_j] -
      \mathbb E[\psi_i \xi_i] \mathbb E[\psi_j \xi_j] \\
      &\leq \sum_{i=0}^{n-1}\sum_{\substack{i\leq j < i+\delta n \\
          (1-\delta)n < j < n}} \mathbb E[\psi_i \psi_j ] \\
      &\leq \sum_{i=0}^{n-1}\sum_{\substack{i\leq j < i+\delta n \\
          (1-\delta)n < j < n}} \mathbb E[\psi_i] \mathbb E[\psi_{j-i}] \\
      &\leq \sum_{i=0}^{n-1} \frac{1}{\ell^*(i)} \left(
        \sum_{j=i}^{i+\delta n-1} \frac{1}{\ell^*(j)} +
        \sum_{j=(1-\delta)n+1}^{n-1} \frac{1}{\ell^*(j)} \right) \\
      &\leq 2 \frac{\delta
        n^2}{\left(\ell^*(n)\right)^2}(1+o_{\delta}(1)).
    \end{split}
    \label{eq:varbetabad}
  \end{equation}
  Inserting \eqref{eq:varbetagood}, \eqref{eq:varbetabad}
  into \eqref{eq:varbeta} and taking $\delta\to0$ yields
  $\Var{|R_{\beta}(n)|}= o ( (\mathbb E|R_\beta (n)|)^2)$
  and the weak law of large numbers follows.

  Finally, part (iii) is proved in the same way as part (ii). The only difference is that instead of using \eqref{eq:sumT} we note that $L(0,\infty)$ is a geometric random variable with parameter $\gamma$, therefore for every $c>0$,
  \begin{equation*}
    \mathbb P\left[ L(0,cn) = k \right] = \gamma(1-\gamma)^{k-1} (1+o(1)) \text{ as } n\to\infty.
  \end{equation*}
  This completes the proof.
\end{proof}

%---------------------------------------------------------------------------------------------------------------------------------------------------------
%---------------------------------------------------------------------------------------------------------------------------------------------------------
%---------------------------------------------------------------------------------------------------------------------------------------------------------
%---------------------------------------------------------------------------------------------------------------------------------------------------------

%<<<1 Biblio
\renewcommand\MR[1]{\relax\ifhmode\unskip\spacefactor3000
\space\fi \MRhref{#1}{#1}}
\renewcommand{\MRhref}[2]%
{\href{http://www.ams.org/mathscinet-getitem?mr=#1}{MR #2}}
\providecommand{\href}[2]{#2}

\bibliographystyle{amsalpha}
\bibliography{references}

\end{document}